\documentclass[11pt]{article}
\usepackage[T1]{fontenc}
\usepackage{lmodern}
\usepackage{amsmath,amssymb,amsthm,mathtools}
\usepackage[margin=1in]{geometry}
\usepackage{microtype}
\usepackage{needspace}
\usepackage{cite}
\usepackage[colorlinks=true,linkcolor=blue,citecolor=blue,urlcolor=blue]{hyperref}
\numberwithin{equation}{section}
\newtheorem{theorem}{Theorem}[section]
\newtheorem{lemma}{Lemma}[section]
\newtheorem{proposition}[lemma]{Proposition}

\newtheorem*{conjecture}{Conjecture A}
\theoremstyle{definition}
\newtheorem{definition}[lemma]{Definition}
\theoremstyle{remark}
\newtheorem{remark}[lemma]{Remark}
\newcommand{\R}{\mathbb R}
\newcommand{\Sph}{\mathbb S}

\newcommand{\ran}{\operatorname{ran}}

\title{Finitely many solutions for the prescribed scalar curvature problem
\protect\\at the critical flatness order}
\date{}
\author{Wei Ji\thanks{School of Mathematics and Statistics, and  Key Laboratory of Nonlinear Analysis $\&$ Applications (Ministry of Education), Central China Normal University, P.O. Box 71010, Wuhan 430079, P. R. China. Email: \texttt{jiwei2020@amss.ac.cn}. }
\, and\, Yixing Pu\thanks{School of Mathematics and Statistics, and  Key Laboratory of Nonlinear Analysis $\&$ Applications (Ministry of Education), Central China Normal University, P.O. Box 71010, Wuhan 430079, P. R. China. Email: \texttt{yxpu@stu.ecnu.edu.cn}.
}
}

\begin{document}
\maketitle

\begin{abstract}
We study the following prescribed scalar curvature problem
\[
 -\Delta u=K(y)u^{2^*-1},\ \ u>0
 \,\ \ \text{in}\ \ \R^N,
\]
where $2^*=\frac{2N}{N-2}$.
For $N\geq4$, we construct a positive $K(y)$ with maximum
set $\{|y|=1\}$ and critical order of flatness $m=N-2$ at $|y|=1$,
such that the equation admits only finitely many positive solutions
in $D^{1,2}(\R^N)$. To this end, we prove the compactness of the solution set for this equation
by blow-up analysis and then apply Sard's theorem to choose $K$
for which each solution is non-degenerate and hence isolated,
so that the solution set is finite.
Our results also give a definite answer to Wei--Yan's conjecture in
[J. Wei and S. Yan, JFA, 2010].
\end{abstract}

\textit{MSC (2020)}: 35J61, 35B33, 53C21.

\textit{Keywords}: prescribed scalar curvature problem, critical flatness order, finitely many solutions.

\section{Introduction}

In this paper, we consider the following equation
\begin{equation}\label{eq:euclidean}
 \begin{cases}
 -\Delta u=K(y)u^{2^*-1}\,\ \ \text{in}\ \ \R^N,\\
 u>0, \ \ u\in D^{1,2}(\R^N),
 \end{cases}
\end{equation}
where $2^*=\frac{2N}{N-2}$, $N\geq4$, $K>0$, and $D^{1,2}(\R^N)$ is the completion of
$C_c^\infty(\R^N)$ with respect to the norm $\displaystyle \|\nabla u\|_{L^2(\R^N)}=\left(\int_{\R^N}|\nabla u|^2\,\mathrm{d}y\right)^{1/2}$.

Problem \eqref{eq:euclidean} is related to the prescribed scalar curvature
problem on the standard sphere $(\Sph^N,g_0)$, which is to find a metric
$g$ conformal to $g_0$ with scalar curvature $\mathbf K$.
In fact, taking $g=v^{4/(N-2)}g_0$, the prescribed scalar curvature problem is equivalent to finding a positive function $v$ on $\Sph^N$ satisfying the following equation
\begin{equation}\label{eq:spherical}
 -\Delta_{g_0}v+\frac{N(N-2)}4v
       =\frac{N-2}{4(N-1)}\mathbf K v^{2^*-1}
       \,\ \text{in }\Sph^N,\,\  v>0,\,\  v\in H^1(\Sph^N),
\end{equation}
where $\Delta_{g_0}$ denotes the Laplace--Beltrami operator with respect to
$g_0$. By stereographic projection $\pi_N$ and conformal invariance, we can deduce \eqref{eq:euclidean}
from \eqref{eq:spherical} with
$K(y)=\frac{N-2}{4(N-1)}\mathbf K(\pi_N^{-1}(y))$.

The prescribed scalar curvature problem has been extensively studied, 
see \cite{AAP,BC,Bianchi96,CNY,CGY93,CY91,CL98,CL01,DN85,LN88,Li95,Li96,Ni82,SZ96}
and the references therein. In particular,
Wei and Yan \cite[Theorem~1.1]{WY} proved that for $N\geq5$, if $K$ is
positive and radial, $K(y)=K(r)$, $r=|y|$, and has the expansion
\begin{equation}\label{eq:wy-expansion}
 \begin{gathered}
 K(r)=K(r_0)-c_0|r-r_0|^m+O(|r-r_0|^{m+\theta})
       \ \ \text{as}\ \ r\to r_0,\\
 r_0,c_0,\theta>0,\ \ 2\leq m<N-2,
 \end{gathered}
\end{equation}
then \eqref{eq:euclidean} admits infinitely many non-radial solutions.

They also proposed the following conjecture in general cases~\cite[Remark~1.5]{WY}.
\begin{conjecture}[Wei--Yan]
Assume that the set
\[
 \{y\in\R^N:\,K(y)=\max_{\R^N}K\}
\]
is an $l$-dimensional smooth manifold without boundary, where $1\leq l\leq
N-1$. Then problem \eqref{eq:euclidean} admits infinitely many solutions.
\end{conjecture}

The proof of \cite[Theorem~1.1]{WY} relies on the radial symmetry of $K$
in \eqref{eq:wy-expansion} and does not directly apply in the general
setting of Conjecture~A. The conjecture has remained open. Several
multiplicity results have been obtained under other assumptions on $K$ as follows.

Under suitable assumptions on a periodic function $K$, Li
\cite[Theorem~0.8]{Li95} obtained infinitely many solutions of
\eqref{eq:euclidean} for $N\geq3$, up to translations by integer
multiples of the period of $K$. The periodicity
assumption also plays an important role in the construction of Li, Wei and Xu
\cite{LWX}. They assume that $K\in C^1(\R^N)$ satisfies $0<c\leq K\leq C$ and
is periodic in the first $k$ variables, where $1\leq k<\frac{N-2}{2}$. Near the critical point $0$ of $K$, they assume
\[
 K(y)=K(0)+\sum_{j=1}^N a_j|y_j|^\beta+R(y),
       \ \ N-2<\beta<N,
\]
where $a_j\ne0$ and $\sum_j a_j<0$. Under suitable assumptions on $R$ and its
derivatives near $0$, they construct solutions with arbitrarily many bubbles.
Thus infinitely many solutions can also exist when $K$ is periodic and $\beta>N-2$.

Peng, Wang and Wei \cite{PWW} obtained infinitely many solutions for $N\geq5$
when $K$ is invariant under rotations of $y'\in\R^2$. They assume that $K$ is
bounded, nonnegative and of the form
\[
 K(y)=K(\rho,y''),\ \ 
 y=(y',y'')\in\R^2\times\R^{N-2},\ \ \rho=|y'|.
\]
They also assume that $K$ is $C^3$ near a point $(\rho_0,y_0'')$, with $\rho_0>0$
and $K(\rho_0,y_0'')=1$, and require
\[
 \begin{gathered}
 \nabla_{\rho,y''}K(\rho_0,y_0'')=0,\ \ 
 \deg\bigl(\nabla_{\rho,y''}K,(\rho_0,y_0'')\bigr)\ne0,\\
 \frac{\partial^2 K}{\partial\rho^2}(\rho_0,y_0'')
 +\sum_{j=3}^N\frac{\partial^2 K}{\partial y_j^2}(\rho_0,y_0'')<0.
 \end{gathered}
\]
Here $\nabla_{\rho,y''}K$ and its local Brouwer degree are taken in the $N-1$
variables $(\rho,y'')$. Under these assumptions, $K$ may depend on $y''$
without being even in those variables, and $(\rho_0,y_0'')$ may be a saddle
point of $K(\rho,y'')$.

Luo, Peng and Zhou \cite{LPZ} studied \eqref{eq:euclidean}
with $K=1+\varepsilon K_0$ for $N\geq5$, where
$K_0\in C^1(\R^N)\cap L^\infty(\R^N)$. They assume that $\Gamma$ is a
compact connected $C^2$ hypersurface without boundary consisting of
critical points of $K_0$, $K_0$ is $C^4$ near $\Gamma$, and
$\Delta K_0<0$ on $\Gamma$. Under suitable assumptions on $K_0$ and on
two distinct points $b_1,b_2\in\Gamma$, they construct, for every
$\varepsilon>0$ small enough, solutions whose two peaks concentrate at
$b_1$ and $b_2$ as $\varepsilon\to0$.

Recently, Li, Wei and Yang \cite{LWY} considered $K$ with the uniform expansion
\begin{equation}\label{eq:lwy-expansion}
 \begin{gathered}
 K(y)=K(r_0)-c_0\bigl||y|-r_0\bigr|^m
       +O\bigl(\bigl||y|-r_0\bigr|^{m+\theta}\bigr)
       \ \ \text{as}\ \ |y|\to r_0,\\
 r_0,c_0>0,\ \ \theta>2,\ \ 
 \min\left\{\frac{N-2}{2},2\right\}<m<N-2.
 \end{gathered}
\end{equation}
The value $K(r_0)$ is constant on $|y|=r_0$ and $K$ need not be
radial or invariant under rotations of $y'\in\R^2$. For $N\geq5$ such that
\[
 g''(x)<\frac{N-2}{N-1}\frac{(g'(x))^2}{g(x)}
       \ \ \text{for all}\ \ x\in(0,\pi),
\]
they obtain infinitely many solutions of \eqref{eq:euclidean}, where
$g(x)=\sum_{j=1}^{\infty}\frac{1-\cos(jx)}{j^N}$.

The flatness assumptions \eqref{eq:wy-expansion} and
\eqref{eq:lwy-expansion} require that the flatness order  $m<N-2$. It is natural to ask whether
the existence of infinitely many solutions of \eqref{eq:euclidean} still
follows at the critical flatness order $m=N-2$. \textbf{For $N\geq4$, we
construct a positive function $K$ with $m=N-2$, for which
\eqref{eq:euclidean} admits only finitely many solutions. Thus $K$ in
Theorem~\ref{thm:main} gives a counterexample to Conjecture~A.}

\Needspace{12\baselineskip}
Our main result is as follows.

\begin{theorem}\label{thm:main}
Suppose $N\geq4$ and $m=N-2$. Then there exist a positive function
$K\in C^2(\R^N)$ and a constant $c_0>0$ satisfying
\begin{equation}\label{eq:maxset}
 \max_{\R^N}K=2,\ \ \{y\in\R^N:\,K(y)=2\}=\{|y|=1\},
\end{equation}
and
\begin{equation}\label{eq:euclidean-expansion}
 K(y)=2-c_0\bigl||y|-1\bigr|^m
      +O\bigl(\bigl||y|-1\bigr|^{m+1}\bigr)
      \ \ \text{as}\ \ |y|\to1,
\end{equation}
such that the solution set of \eqref{eq:euclidean} is finite,
possibly empty.
\end{theorem}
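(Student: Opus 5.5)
The plan follows the abstract: first prove compactness of the solution set, then use Sard's theorem to make every solution non-degenerate. Throughout I work on $\Sph^N$ via \eqref{eq:spherical}, so that solutions of \eqref{eq:euclidean} in $D^{1,2}(\R^N)$ correspond to positive solutions $v\in H^1(\Sph^N)$. I will take $\mathbf K=\mathbf K_0+\mathbf h$. Here $\mathbf K_0$ is a fixed positive function with maximum set the equator $\pi_N^{-1}(\{|y|=1\})$ and exact profile $2-c_0\bigl||y|-1\bigr|^{N-2}$ near it. The perturbation $\mathbf h$ is small, supported away from the equator and away from the critical set of $\mathbf K_0$, and is drawn from a finite-dimensional family $\{\sum_{i}t_i\phi_i\}$ with $t$ in a small ball. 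This family keeps \eqref{eq:maxset} and \eqref{eq:euclidean-expansion} valid for every parameter and keeps $K\in C^2$. Finiteness then follows from two facts. First, the set $\mathcal S_t$ of solutions is compact in $C^{2}$, uniformly in $t$. Second, for a.e.\ $t$ every solution is non-degenerate, so $\mathcal S_t$ is discrete and compact, hence finite.

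\textbf{Step 1 (compactness) is the main obstacle.} Suppose $v_k$ are solutions with $\|v_k\|_\infty\to\infty$ for $\mathbf K_{t_k}$. Standard blow-up analysis (Schoen, Li, Chen--Lin) gives isolated simple blow-up points at which $\nabla\mathbf K=0$. The flatness assumption rules out blow-up at critical points away from the equator: I would choose $\mathbf K_0$ so that those critical points are non-degenerate with $\Delta\mathbf K_0\neq0$ of the sign excluded by the Pohozaev identity, or simply arrange that such points do not exist apart from poles handled by hand. The delicate part is blow-up at a point $x_0$ on the equator. Near $x_0$ the function $K$ is flat of order exactly $N-2$, and only in the normal direction. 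The local Pohozaev identity around $x_0$ at scale $\lambda_k^{-1}$ balances two terms. One is $\int \langle y-x_0,\nabla K\rangle u_k^{2^*}$, which is of order $\lambda_k^{-(N-2)}$ times $c_0\int \bigl|\langle z,\nu\rangle\bigr|^{N-2}U^{2^*}$, and at $m=N-2$ this integral is borderline; I expect a $\log\lambda_k$ factor. The other is the boundary term, of order $\lambda_k^{-(N-2)}$ and determined by the regular part of the Green's function, hence by the other bubbles and by the positive mass. The task is to show these cannot balance. The $\log$ divergence at the critical order should make the $K$-term dominate: $\int_{|z|<\delta\lambda_k}|z_\nu|^{N-2}U^{2^*}\,dz\sim\log\lambda_k$, since $U^{2^*}\sim|z|^{-2N}$. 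Because $\langle y-x_0,\nabla K\rangle\le 0$ has a definite sign, this yields a contradiction. I would also have to handle multiple bubbles clustering along the equator. Their interaction terms are only $O(\lambda^{-(N-2)}d^{-(N-2)})$, without the log, so the log again wins provided the concentration points are not too close. For close points I would use the standard fact that isolated blow-ups in dimension $N\ge4$ under flatness are isolated simple with separation bounded below, via Li's argument. Together with the Harnack inequality and the energy bound coming from the decay of $D^{1,2}$ solutions and the Kelvin transform, this gives uniform $C^{2,\alpha}$ bounds and a positive lower bound.

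\textbf{Step 2 (non-degeneracy).} Consider $F(v,t)=v-(-\Delta_{g_0}+\tfrac{N(N-2)}4)^{-1}\bigl(\tfrac{N-2}{4(N-1)}\mathbf K_t v^{2^*-1}\bigr)$ on the open set of positive $C^{2,\alpha}$ functions. This map is Fredholm of index $0$ in $v$, and its zero set is a Banach manifold provided $D_tF$ covers the cokernel of $D_vF$. The cokernel is spanned by kernel elements $\phi$ of the linearized operator, and $D_tF\cdot s$ pairs with $\phi$ as $\int \mathbf h_s v^{2^*-1}\phi$. This pairing is nonzero for some $\mathbf h_s$ supported in the chosen open set, because by unique continuation $\phi v^{2^*-1}\not\equiv0$ there. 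Compactness (Step 1) lets me pass from infinitely many directions to a finite family $\phi_i$, giving a uniform finite-dimensional parametrization. Sard--Smale applied to the projection $(v,t)\mapsto t$ then yields a regular value $t$ arbitrarily close to $0$. For that $t$, every solution has an invertible linearization and is therefore isolated in $C^{2,\alpha}$. By Step 1 the set of solutions is compact, so it is finite. Possible emptiness is allowed, so no degree computation is needed.
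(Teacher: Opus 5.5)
There is a genuine gap in Step 1, exactly at the point you flag as the main obstacle. The logarithm you rely on does not exist. Since $U^{2^*}\sim|z|^{-2N}$, the integrand $|z_\nu|^{N-2}U^{2^*}$ decays like $|z|^{-N-2}$, which is integrable at infinity in $\R^N$; a logarithm appears only at flatness order $N$. Hence at $m=N-2$ the $K$-term is of order $c_0\lambda_k^{-(N-2)}$. That is the \emph{same} order as the interaction $\lambda_k^{-(N-2)}d^{-(N-2)}$ between two bubbles at distance $d$, and this coincidence is precisely what makes $m=N-2$ critical. So, with a fixed $\mathbf K_0$ and arbitrary $c_0$, nothing in your argument rules out two or more bubbles concentrating on the equator. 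Appealing to ``Li's argument'' does not rescue this. The result giving at most one isolated simple blow-up point (Li's Theorem~4.2, Lemma~\ref{lem:li-global} here) needs condition $(*)_{N-2}$ on all of $\Sph^N$ with constant $L\to0$, a smallness you never impose. Moreover, making the other critical points non-degenerate would violate $(*)_{N-2}$ outright for $N\geq5$. Even for a single bubble your sign claim fails: $\langle y-x_0,\nabla K\rangle$ changes sign near an equatorial point $x_0$. For example, it is positive when $|y|<1$ and $\cos\angle(y,x_0)<|y|$.

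The missing idea is to build the smallness into $K$ itself. The paper takes $K=K_\varepsilon+h$, where $K_\varepsilon-2=O(\varepsilon)$ is flat of order $N-2$ at \emph{every} critical point: on $\{|y|=1\}$, at $0$, and at $\infty$ through $K_\varepsilon(y/|y|^2)=K_\varepsilon(y)$. Then \eqref{eq:perturbed-flatness} gives $(*)_{N-2}$ with $L=O(\varepsilon^{1/(m-1)})\to0$, so a blow-up sequence with $\varepsilon_i\to0$ has exactly one isolated simple blow-up point. For a single bubble there is no competing $\lambda^{-(N-2)}$ term. In stereographic coordinates the Green's function is exactly $c|x-y|^{2-N}$ with no regular part, so there is no ``positive mass'' on the round sphere. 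The single bubble is then excluded by the global Kazdan--Warner identity with the conformal field $Y_{\bar y}$. This field is tangent to $\{|y|=1\}$ and satisfies $Y_{\bar y}\cdot\nabla K\geq c\varepsilon\bigl||y|-1\bigr|^m$ near $\bar y$. It yields $\varepsilon M_i^{-2m/(N-2)}\lesssim\varepsilon M_i^{-2^*}$, which is impossible because $m<N$. This is also why the theorem's constant is $c_0=\varepsilon(m/2)^{m/2}$ with $\varepsilon$ small. Your Step 2 matches the paper's argument: a finite-dimensional family, unique continuation to cover the cokernel, Sard on the projection, then isolation plus compactness. It is fine once Step 1 is repaired.
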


\begin{remark}
The set $\{|y|=1\}$ in \eqref{eq:maxset} satisfies the hypothesis
of Conjecture~A with $l=N-1$. Thus Theorem~\ref{thm:main} gives a definite answer to Conjecture~A.
\end{remark}

\begin{remark}
Expansion \eqref{eq:euclidean-expansion} has the same form as the expansion in
\eqref{eq:lwy-expansion}, with $r_0=1$ and $\theta=1$.
The flatness order $m$ is critical, in the sense that $m=N-2$.
Theorem~\ref{thm:main} thus shows that the result of Li, Wei
and Yang \cite{LWY} on the existence of infinitely many solutions is, in some
sense, sharp with respect to the flatness order.
\end{remark}

\begin{remark}\label{rem:boundary}
The argument for Theorem~\ref{thm:main} can also be adapted to answer
a conjecture of Wang and Zhao \cite{WZ} on the prescribed boundary mean
curvature problem. More precisely, they considered
$u\in D^{1,2}(\R^N_+)$ satisfying
\begin{equation}\label{eq:boundary-problem}
 \left\{
 \begin{array}{@{}l@{\,\ \ }l@{}}
 -\Delta u=0,\ \ u>0
   &\text{in}\ \ \R^N_+,\\
 -\dfrac{2}{N-2}\partial_{y_N}u=H(y')u^{2^\#-1}
   &\text{on}\ \ \partial\R^N_+,
 \end{array}
 \right.
\end{equation}
where $2^\#=\frac{2(N-1)}{N-2}$ and
$\R^N_+=\{y=(y',y_N)\in\R^{N-1}\times\R:\,y_N>0\}$.
For $N\geq5$, they obtained infinitely many solutions when $H$ is
positive, bounded and radial and, with $r=|y'|$,
\[
 H(r)=H(r_0)-d_0|r-r_0|^m+O(|r-r_0|^{m+\theta}),
 \ \ r_0,d_0,\theta>0,\ \ 2\leq m<N-2.
\]
Meanwhile they conjectured that if the set
\[
 \left\{(y',0)\in\partial\R^N_+:\,
       H(y')=\max_{\R^{N-1}}H\right\}
\]
is an $l$-dimensional smooth manifold without boundary, where
$1\leq l\leq N-2$, then problem \eqref{eq:boundary-problem} admits
infinitely many positive solutions.
Combining the results in \cite{JLX} with arguments similar to those
in the proof of Theorem~\ref{thm:main}, we can then obtain an analogous
finiteness result for \eqref{eq:boundary-problem}.
\end{remark}

To construct $K$ in Theorem~\ref{thm:main}, we set $m=N-2$ and,
for $0<\varepsilon\leq1$, define the leading term $K_{\varepsilon}$ by
\begin{equation}\label{eq:background}
 K_{\varepsilon}(y)=2-\varepsilon\left[1-
       \left(\frac{4|y|^2}{(1+|y|^2)^2}\right)^{m/2}\right]^{m/2}.
\end{equation}
Theorem~\ref{thm:main} is an immediate consequence of the following theorem.

\begin{theorem}\label{thm:construction}
Suppose that $N\geq4$, $K_{\varepsilon}$ is given by \eqref{eq:background},
and $\Omega$ is a nonempty open subset of $\R^N$ satisfying
$\overline{\Omega}\subset\{y:\,1<|y|<2\}$.
Then there exists $\varepsilon_*\in(0,1]$ such that, for any
$0<\varepsilon\leq\varepsilon_*$, any integer $s\geq0$ and any $\eta>0$,
there exists $h\in C_c^\infty(\Omega)$ with $\|h\|_{C^s}<\eta$ such that
the function
\begin{equation*}
 K(y)=K_{\varepsilon}(y)+h(y)
\end{equation*}
is positive, belongs to $C^2(\R^N)$ and satisfies \eqref{eq:maxset} and
\eqref{eq:euclidean-expansion} in Theorem~\ref{thm:main}, with
$c_0=\varepsilon(m/2)^{m/2}$. The solution set of
\eqref{eq:euclidean} is finite, possibly empty.
\end{theorem}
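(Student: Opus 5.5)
The plan is to pass to the sphere, prove compactness of the solution set uniformly in small perturbations $h$, and then use a transversality (Sard--Smale) argument to pick $h$ making every solution non-degenerate.

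\textbf{Structural properties.} Under the inverse stereographic projection $x=\pi_N^{-1}(y)$ we have $x_{N+1}=\frac{|y|^2-1}{|y|^2+1}$, hence $1-x_{N+1}^2=\frac{4|y|^2}{(1+|y|^2)^2}$. Therefore
\[
K_\varepsilon=2-\varepsilon\bigl[1-(1-x_{N+1}^2)^{m/2}\bigr]^{m/2}
\]
depends only on the height $x_{N+1}$. I would first check the elementary claims of Theorem~\ref{thm:construction}.

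Near the equator, $1-(1-x_{N+1}^2)^{m/2}=\frac m2 x_{N+1}^2\bigl(1+O(x_{N+1}^2)\bigr)$. This gives $K_\varepsilon=2-\varepsilon(m/2)^{m/2}|x_{N+1}|^m\bigl(1+O(x_{N+1}^2)\bigr)$. With $x_{N+1}=(|y|-1)+O((|y|-1)^2)$, we obtain \eqref{eq:euclidean-expansion} with $c_0=\varepsilon(m/2)^{m/2}$. Since $m\ge 2$, $K_\varepsilon\in C^2$; in Euclidean coordinates $K_\varepsilon\to 2-\varepsilon$ at infinity. The function $h$ is supported in $\Omega\subset\{1<|y|<2\}$, where $2-K_\varepsilon\ge\delta(\varepsilon,\Omega)>0$. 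So $\|h\|_{C^0}<\delta$ keeps positivity and the maximum set \eqref{eq:maxset}, and it does not affect the expansion near $|y|=1$.

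\textbf{Compactness (main step).} Let $\mathcal U$ be a small $C^2$-ball of $h\in C^2_c(\Omega)$. I would prove that, for $\varepsilon\le\varepsilon_*$, solutions of \eqref{eq:spherical} with $\mathbf K$ corresponding to $K_\varepsilon+h$ and $h\in\mathcal U$ satisfy $C^{-1}\le v\le C$ and $\|v\|_{C^{2,\alpha}}\le C$. Solutions in $D^{1,2}(\R^N)$ correspond to $H^1$ solutions on $\Sph^N$, and hence to smooth ones by regularity.

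The argument is by contradiction via blow-up analysis in the style of Schoen and Li. Blow-up points are critical points of $\mathbf K$, and they are isolated and simple. This uses the flatness order $m=N-2$ at all critical points: the equator, where $\mathbf K$ is maximal; the poles, where $\mathbf K\approx 2-\varepsilon+\varepsilon\frac m2|z|^m$; and the critical points inside $\operatorname{supp}h$. At the critical points inside $\operatorname{supp}h$, the $C^2$-smallness of $h$ makes $|\nabla\mathbf K|$ dominated appropriately, or forces nondegeneracy for generic $h$ if I restrict $\mathcal U$ further.

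At each blow-up point I would apply the Pohozaev identity. Because $m=N-2$, the term $\int(x-\xi)\cdot\nabla\mathbf K\,v^{2^*}$ is of order $\varepsilon\lambda^{-(N-2)}$, exactly the order of the Green-function interaction term $\lambda^{-(N-2)}\bigl(A+\sum_{j}G(\xi,\xi_j)\bigr)$ with $A>0$. Taking $\varepsilon_*$ small makes the $\varepsilon$-term strictly dominated by the positive interaction/mass term, so the Pohozaev balance cannot hold, and there is no blow-up.

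This is the step I expect to be hardest. It requires controlling the precise constants in the $\varepsilon$-term, uniformly over the number of bubbles and their locations along the equator, as well as handling the non-isolated blow-up set $\{x_{N+1}=0\}$. I would first establish isolatedness and simplicity with the flatness hypothesis, as in Li's work for $\beta=n-2$, and then sum the Pohozaev identities.

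\textbf{Genericity and finiteness.} Consider
\[
F(v,h)=v-\Bigl(-\Delta_{g_0}+\tfrac{N(N-2)}4\Bigr)^{-1}\Bigl(\tfrac{N-2}{4(N-1)}\mathbf K_h v_+^{2^*-1}\Bigr)
\]
on $C^{2,\alpha}(\Sph^N)\times C^s_c(\Omega)$, which is a $C^1$ Fredholm map of index $0$ in $v$.

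At a zero $(v,h)$, the full derivative is surjective. Indeed, if $\varphi$ is in the cokernel, then $\varphi$ solves the linearized equation and $\int h'\,v^{2^*-1}\varphi=0$ for all $h'\in C_c^\infty(\Omega)$. Hence $\varphi\equiv0$ on $\Omega$, since $v>0$, and unique continuation gives $\varphi\equiv0$.

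By Sard--Smale, for $h$ in a residual, hence dense, subset of $\mathcal U\cap C^s_c(\Omega)$, every solution is non-degenerate; we can then take such $h\in C_c^\infty(\Omega)$ with $\|h\|_{C^s}<\eta$. Non-degenerate solutions are isolated by the implicit function theorem. Together with the compactness step, the solution set is a compact discrete set, hence finite, possibly empty.
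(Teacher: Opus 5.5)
Your structural checks and your transversality step (a cokernel element vanishes on $\Omega$, then unique continuation) agree with the paper. The compactness step, however, has a genuine gap: your mechanism cannot rule out blow-up at a \emph{single} point, and that is exactly the case that remains.

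First, $h$ has to be small in $C^m$ relative to $\varepsilon$; the paper takes $\|h\|_{C^m}\le\varepsilon^2$. For $N\ge5$, $C^2$-smallness does not give Li's condition $(*)_{N-2}$. On the other hand, $C^1$-smallness already excludes critical points in $\Omega$, since $|\nabla K_\varepsilon|\ge c\varepsilon$ there. With this choice, $K$ and $\tilde K$ satisfy $(*)_{N-2}$ with constant $L=C\varepsilon^{1/(m-1)}$. Li's theorem, applied along $\varepsilon_i\to0$, then already gives precisely one blow-up point, which is isolated and simple. So the multi-bubble interaction terms you invoke are not where the difficulty lies.

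For a single bubble, your contradiction relies on a mass $A>0$. But the round $\Sph^N$ has zero mass. In stereographic coordinates $M_iu_i\to a|y-\bar y|^{2-N}$ with no regular part, so the Pohozaev boundary term vanishes in the limit. There is then nothing positive for the $\varepsilon$-term to be dominated by, and shrinking $\varepsilon$ cannot help. A sanity check: with $\varepsilon=0$ your argument would give compactness for constant $K$, which is false because of the standard bubbles.

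What is missing is a definite sign for the $\nabla K$-term itself, coming from the structure of $K_\varepsilon$. The paper uses the exact global Kazdan--Warner identity $\int_{\R^N}(Y\cdot\nabla K)u^{2^*}\,\mathrm{d}y=0$ with two conformal fields:
\begin{itemize}
\item $Y=y$, for blow-up at the pole $0$;
\item $Y=Y_{\bar y}$, for blow-up at $\bar y$ with $|\bar y|=1$.
\end{itemize}
Near the critical set $K=K_\varepsilon$ is radial, and $Y_{\bar y}\cdot\nabla K=\frac{(\bar y\cdot y)(1-r^2)}{2r}K_\varepsilon'(r)$. Hence $Y\cdot\nabla K\ge c\varepsilon|t_N|^m\ge0$ near $\bar y$.

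The bubble lower bound then gives $\int_{\mathtt{V}_\rho}(Y\cdot\nabla K_i)u_i^{2^*}\,\mathrm{d}y\ge c\varepsilon_iM_i^{-2m/(N-2)}$. Uniqueness of the blow-up point makes the remaining part $O(\varepsilon_iM_i^{-2^*})$. Together these force $M_i^{2(N-m)/(N-2)}\le C$, which is impossible since $m<N$. The factor $\varepsilon_i$ appears on both sides and cancels, which is what lets the argument run with $\varepsilon_i\to0$.

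A local Pohozaev argument centred at $y_i$ could be repaired in the same spirit, since convexity of $s\mapsto|s|^m$ gives the limiting $\varepsilon$-term a definite sign. But that sign is precisely the ingredient your outline lacks.

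A smaller point concerns your Sard--Smale step on $C^{2,\alpha}\times C^s_c(\Omega)$. H\"older spaces are not separable and $C^s_c(\Omega)$ is not complete, so this needs adjustment. The paper instead uses compactness of the solution set for $h=0$ to find a finite-dimensional $E\subset C_c^\infty(\Omega)$ on which $DF$ is onto. It then applies classical Sard to the projection $\mathcal M\to E$, which also produces $h\in C_c^\infty(\Omega)$ directly.
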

\begin{remark}
The choice of $K_\varepsilon$ in \eqref{eq:background} is dictated by
the proof. In fact, the parameter $\varepsilon$ ensures that the
constants $L_i$ in Lemma~\ref{lem:li-global} tend to zero as
$\varepsilon_i\to0$.
Moreover, the choice of $2|y|/(1+|y|^2)$ gives
$K_\varepsilon(y)=K_\varepsilon(\frac{y}{|y|^2})$ for $y\ne0$,
so that the Kelvin transform \eqref{eq:kelvin-solution} reduces blow-up at infinity to blow-up at $0$.
\end{remark}

\begin{remark}\label{rem:scope}
For $N=3$, taking $m=N-2=1$ in \eqref{eq:background}, we obtain a
Lipschitz function $K_{\varepsilon}$ which does not satisfy the $C^1$
hypothesis in Li \cite[Theorem~4.2]{Li95}, and hence our argument for
Theorem~\ref{thm:construction} cannot be applied in this case.
However, our argument also applies to
the case $N=3$ with $m=2$. The conclusions of
Theorem~\ref{thm:construction} and hence of Theorem~\ref{thm:main}
therefore hold in the case $N=3,m=2$.
\end{remark}

\begin{remark}
If $N$ is even, then $m/2$ is an integer. Hence
$K_{\varepsilon}$ in \eqref{eq:background} and the function
$K=K_{\varepsilon}+h$ in Theorem~\ref{thm:construction}
belong to $C^\infty(\R^N)$.
Thus, for even $N\geq4$, Theorem~\ref{thm:main} holds with
$K\in C^\infty(\R^N)$.
\end{remark}

We divide the proof into three steps. We first consider possible blow-up
sequences $(u_i)$ of solutions to \eqref{eq:euclidean} with
$K=K_{\varepsilon_i}+h_i$ and $\varepsilon_i\to0^+$, and study
the blow-up points of $(u_i)$ and their properties. We next use the Kazdan--Warner identity to exclude the blow-up.
Applying Harnack's inequality and elliptic estimates, we then obtain
uniform a-priori estimates for $u$ and compactness of the solution set
of \eqref{eq:euclidean} for fixed $K=K_{\varepsilon}+h$. Finally, we fix $\varepsilon>0$ small enough and choose $h$ such that each
solution $u$ of \eqref{eq:euclidean} with $K=K_{\varepsilon}+h$ is non-degenerate
in $D^{1,2}(\R^N)$. To this end, we study the linearized operator of \eqref{eq:euclidean}
around a solution $u$ and apply Sard's theorem to the projection
$(h,u)\mapsto h$ on a finite-dimensional solution manifold. 
Once non-degeneracy is established, we apply the implicit
function theorem to obtain local uniqueness at each solution $u$.
Together with compactness, we obtain that the solution set of \eqref{eq:euclidean} is finite.

This paper is organized as follows. Section~2 is devoted to the
properties of $K$, the regularity of solutions of
\eqref{eq:euclidean}, and the Kazdan--Warner identity.
In Section~3, we establish a-priori estimates by blow-up analysis.
In Section~4, we study the linearized equation and choose a suitable $h$ to complete the proof of
Theorem~\ref{thm:construction}.

\section{Preliminaries}

In the rest of the paper, $N$ and $\Omega$ are fixed as in Theorem~\ref{thm:construction}.
The positive constants $c$ and $C$ may change from line to line. We denote by
$\mathbf{B}_\rho(a)$ the ball $\{y\in\R^N:\,|y-a|<\rho\}$ and put
$\mathbf{B}_\rho=\mathbf{B}_\rho(0)$. Here $D^j f$ denotes all partial
derivatives of $f$ of order $j$.

To describe $K$ for large $|y|$, define
\[
 \tilde{K}(y)=K\left(\frac{y}{|y|^2}\right),\ \ 
 \tilde{h}(y)=h\left(\frac{y}{|y|^2}\right),\ \ y\ne0.
\]
For $h\in C_c^\infty(\Omega)$, we set $\tilde{h}(0)=0$. Thus $\tilde{K}$ near
$0$ corresponds to $K$ for large $|y|$. We use the same notation for the
continuous extension of $\tilde{K}$ across $0$. It follows from
\eqref{eq:background} that
\begin{equation}\label{eq:kelvin}
 K_{\varepsilon}\left(\frac{y}{|y|^2}\right)=K_{\varepsilon}(y),
       \ \ y\ne0.
\end{equation}
We first give some properties of $K_{\varepsilon}$.

\begin{lemma}\label{lem:background-properties}
Suppose $0<\varepsilon\leq1$. Then $K_{\varepsilon}\in C^2(\R^N)\cap
C^{m-1,1}(\R^N)$ and satisfies
\[
 \begin{aligned}
 &\max_{\R^N}K_{\varepsilon}=2,\ \ 
       \{y\in\R^N:\,K_{\varepsilon}(y)=2\}=\{|y|=1\},\\
 &\min_{\R^N}K_{\varepsilon}=2-\varepsilon,\ \ 
       \{y\in\R^N:\,K_{\varepsilon}(y)=2-\varepsilon\}=\{0\},\\
&\lim_{|y|\to\infty}K_{\varepsilon}(y)=2-\varepsilon.
 \end{aligned}
\]
Moreover,
\[
 \{y\in\R^N:\,\nabla K_{\varepsilon}(y)=0\}=\{0\}\cup\{|y|=1\},
\]
and there exist a fixed open neighborhood $\mathcal U$ of $\{0\}\cup\{|y|=1\}$ and a constant
$C>0$ such that, for $y\in\mathcal U$ with $\nabla
K_{\varepsilon}(y)\ne0$,
\begin{equation}\label{eq:flatness}
 |D^jK_{\varepsilon}(y)|
 \leq C\varepsilon^{(j-1)/(m-1)}
       |\nabla K_{\varepsilon}(y)|^{(m-j)/(m-1)}\,\ \  \text{for any}\ \ 2\leq j\leq m.
\end{equation}
\end{lemma}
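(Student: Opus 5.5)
The plan is to write $K_\varepsilon=2-\varepsilon F(y)$ with $F=(1-\phi^{m/2})^{m/2}$ and $\phi(y)=\frac{4|y|^2}{(1+|y|^2)^2}=s^2$, where $s=\frac{2|y|}{1+|y|^2}\in[0,1]$. Thus $F=G(s)$ with $G(s)=(1-s^m)^{m/2}$. Since $s$ is smooth and radial, $s(y)=s(y/|y|^2)$, $s=1$ exactly on $|y|=1$, $s=0$ exactly at $0$, and $s\to0$ at infinity, the max/min statements follow at once. Indeed, $G$ is strictly decreasing on $[0,1]$ with $G(0)=1$ and $G(1)=0$. This gives $\max K_\varepsilon=2$ attained only on $|y|=1$, $\min K_\varepsilon=2-\varepsilon$ attained only at $0$, and the limit $2-\varepsilon$ at infinity.

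For regularity, I would work in terms of $r=|y|$ near each critical set. Near $0$, $s^m=2^m|y|^m(1+|y|^2)^{-m}$, and $1-s^m$ stays near $1$, so $F$ is smooth away from $|y|=1$ except possibly at $0$, where $|y|^m$ enters. This is $C^{m-1,1}$ (and smooth when $m$ is even), hence $C^2$ since $m\ge2$. Near $|y|=1$, $1-s=\frac{(1-r)^2}{1+r^2}$, so $1-s^m=(1-r)^2 a(r)$ with $a$ smooth and positive. Hence $F=|1-r|^m a(r)^{m/2}$, which is again $C^{m-1,1}$ and $C^2$. The same formula gives the expansion with $c_0=\varepsilon (m/2)^{m/2}$, since $a(1)=m/2$. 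For the gradient, the chain rule gives $\nabla F=G'(s)\nabla s$. Here $\nabla s=0$ only at $|y|=1$, where $s$ attains its max; $G'(s)=-\frac{m^2}{2}s^{m-1}(1-s^m)^{m/2-1}$ vanishes only at $s=0$, i.e. $y=0$, and at $s=1$. So the zero set of $\nabla K_\varepsilon$ is exactly $\{0\}\cup\{|y|=1\}$.

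The main step is \eqref{eq:flatness}. Near each critical set the function is, up to a smooth positive factor, a power of a single smooth variable $t$ vanishing nondegenerately. Near $|y|=1$ take $t=1-r$, so $F=|t|^m b(t)$ with $b$ smooth and positive. Near $0$ take $t=|y|$, so $F=1-c\,t^m\beta(t^2)$ with $c,\beta>0$. In the first case, $|\nabla F|\ge c|t|^{m-1}$ on a small neighborhood $\mathcal U$, while $|D^jF|\le C|t|^{m-j}$ for $j\le m$; at $j=m$ this holds a.e., reading $D^m$ as bounded. Since $|\nabla K_\varepsilon|=\varepsilon|\nabla F|$, we get
\[
|D^jK_\varepsilon|\le C\varepsilon|t|^{m-j}\le C\varepsilon\,(\varepsilon^{-1}|\nabla K_\varepsilon|)^{(m-j)/(m-1)}=C\varepsilon^{(j-1)/(m-1)}|\nabla K_\varepsilon|^{(m-j)/(m-1)}.
\]
Near $0$ the derivatives of $t^m\beta(t^2)$ as a function of $y$ obey the same bounds, since $D^j|y|^m=O(|y|^{m-j})$ by homogeneity, and $|\nabla F|\ge c|y|^{m-1}$; the identical computation applies. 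The delicate point, which I expect to be the main obstacle, is the bookkeeping that the lower bound on $|\nabla F|$ and the upper bounds on $D^jF$ hold uniformly on a neighborhood $\mathcal U$ independent of $\varepsilon$. This holds because $F$ does not depend on $\varepsilon$, so $\mathcal U$ and $C$ are fixed. A second subtlety is interpreting $D^m$ at the $C^{m-1,1}$ level when $m$ is odd; there the bound holds wherever $\nabla K_\varepsilon\ne0$, which is away from the singular sets, where $F$ is smooth.
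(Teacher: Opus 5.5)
Your proposal is correct and follows essentially the same route as the paper. You read off the max/min and critical-set statements from monotonicity in $r=|y|$, here via $s=2|y|/(1+|y|^2)$ and the decreasing profile $G$. You then obtain \eqref{eq:flatness} from the lower bound $|\nabla K_\varepsilon|\geq c\varepsilon|t|^{m-1}$ and the upper bounds $|D^jK_\varepsilon|\leq C\varepsilon|t|^{m-j}$ near $0$ and near $\{|y|=1\}$, as in \eqref{eq:polar-derivative-bounds}--\eqref{eq:equator-derivative-bounds}; your factorization $1-s^m=(1-r)^2a(r)$ with $a(1)=m/2$ usefully makes explicit the expansion the paper only states.

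The remaining slips are cosmetic:
\begin{itemize}
\item $s$ is not smooth at $0$, so $\nabla K_\varepsilon(0)=0$ should be read off from $F=1-O(|y|^m)$ rather than from $G'(0)=0$ and the chain rule.
\item For odd $m$, your factor $\beta$ near $0$ also depends on $|y|^m$, not only on $|y|^2$; this does not affect $D^jF=O(|y|^{m-j})$.
\item $G'(1)\neq0$ when $m=2$, which is harmless since $\nabla s=0$ on $\{|y|=1\}$.
\end{itemize}
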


\begin{proof}
By \eqref{eq:background}, $K_{\varepsilon}\in C^2(\R^N)\cap C^{m-1,1}(\R^N)$,
$K_{\varepsilon}(0)=2-\varepsilon$, $K_{\varepsilon}(y)=2$ for $|y|=1$, and
$\lim_{|y|\to\infty}K_{\varepsilon}(y)=2-\varepsilon$.
Set $r=|y|$. We calculate that $\nabla K_{\varepsilon}(0)=0$,
$K_{\varepsilon}'(r)>0$ for $0<r<1$ and $K_{\varepsilon}'(r)<0$ for $r>1$.

As $r\to0$, we have
\begin{equation}\label{eq:polar-expansion}
 K_{\varepsilon}
 =2-\varepsilon+\varepsilon m2^{m-1}r^m
       +O(\varepsilon r^{m+2}),\ \,
 K_{\varepsilon}'(r)
 =\varepsilon m^2 2^{m-1}r^{m-1}
       +O(\varepsilon r^{m+1}),
\end{equation}
and
\begin{equation}\label{eq:polar-derivative-bounds}
 \begin{aligned}
 c\varepsilon r^{m-1}&\leq|\nabla K_{\varepsilon}|
       \leq C\varepsilon r^{m-1},\\
 |D^jK_{\varepsilon}|&\leq C\varepsilon r^{m-j}
      \,\ \  \text{for any}\ \ 2\leq j\leq m.
 \end{aligned}
\end{equation}
As $r\to1$, we have
\begin{equation}\label{eq:equator-expansion}
 \begin{aligned}
 K_{\varepsilon}
 &=2-\varepsilon\left(\frac m2\right)^{m/2}|r-1|^m
       +O(\varepsilon|r-1|^{m+1}),\\
 K_{\varepsilon}'(r)
 &=-\varepsilon m\left(\frac m2\right)^{m/2}|r-1|^{m-2}(r-1)
       +O(\varepsilon|r-1|^m),
 \end{aligned}
\end{equation}
and
\begin{equation}\label{eq:equator-derivative-bounds}
 \begin{gathered}
 c\varepsilon|r-1|^{m-1}\leq|\nabla K_{\varepsilon}|
       \leq C\varepsilon|r-1|^{m-1},\\
 |D^jK_{\varepsilon}|\leq C\varepsilon|r-1|^{m-j}
      \,\ \  \text{for any}\ \ 2\leq j\leq m.
 \end{gathered}
\end{equation}
We thus deduce from \eqref{eq:polar-derivative-bounds}
and \eqref{eq:equator-derivative-bounds} that \eqref{eq:flatness} holds. This
completes the proof.
\end{proof}

\begin{lemma}\label{lem:supported-properties}
Suppose that $\varepsilon>0$ is small enough and that
\begin{equation}\label{eq:admissible}
 K=K_{\varepsilon}+h,\ \
 h\in C_c^\infty(\Omega),\ \ \|h\|_{C^m}\leq\varepsilon^2.
\end{equation}
Then both $K$ and $\tilde{K}$ belong to
$C^2(\R^N)\cap C^{m-1,1}(\R^N)$.
The critical set of $K$ in $\R^N$ is $\{0\}\cup\{|y|=1\}$,
$1/2\leq K\leq5/2$, and \eqref{eq:maxset} holds.

Moreover, there exist constants $\gamma,C>0$ and an open neighborhood
$\mathcal V\subset\mathbf B_2$ of $\{0\}\cup\{|y|=1\}$,
all independent of $\varepsilon$ and $h$, such that
\[
 K(0)=\tilde{K}(0)=2-\varepsilon,\ \ 
 K=\tilde{K}=K_{\varepsilon}\,\ \ \text{on}\ \ \mathcal V,
\]
\begin{equation}\label{eq:small-gradient-region}
 \begin{split}
 &\{y\in\overline{\mathbf B}_2:\,|\nabla K(y)|<\gamma\varepsilon\}\cup\{y\in\overline{\mathbf B}_2:\,
       |\nabla\tilde{K}(y)|<\gamma\varepsilon\}
       \subset\mathcal V,
 \end{split}
\end{equation}
and
\[
 \|K-2\|_{C^{m-1,1}(\overline{\mathbf B}_3)}
 +\|\tilde{K}-2\|_{C^{m-1,1}(\overline{\mathbf B}_3)}\leq C\varepsilon.
\]
For $2\leq j\leq m$, the following estimates hold on $\mathbf B_3$ wherever the
corresponding gradient is nonzero:
\begin{equation}\label{eq:perturbed-flatness}
 \begin{aligned}
 |D^jK|&\leq C\varepsilon^{(j-1)/(m-1)}
       |\nabla K|^{(m-j)/(m-1)},\\
 |D^j\tilde{K}|&\leq C\varepsilon^{(j-1)/(m-1)}
       |\nabla\tilde{K}|^{(m-j)/(m-1)}.
 \end{aligned}
\end{equation}
Equivalently, if $\mathbf{F}=(K-2)/\varepsilon$ and
$\tilde{\mathbf{F}}=(\tilde{K}-2)/\varepsilon$, then for any $2\leq j\leq m$,
\begin{equation}\label{eq:normalized-flatness}
      \begin{aligned}
             |D^j\mathbf{F}|\leq C|\nabla\mathbf{F}|^{(m-j)/(m-1)} \ \,\ \ &\text{on}\ \ 
\{y\in\mathbf B_3:\,\nabla\mathbf{F}(y)\ne0\}\\
 |D^j\tilde{\mathbf{F}}|
 \leq C|\nabla\tilde{\mathbf{F}}|^{(m-j)/(m-1)} \,\ \ &\text{on}\ \ \{y\in\mathbf B_3:\,\nabla\tilde{\mathbf{F}}(y)\ne0\}
      \end{aligned}
\end{equation}
\end{lemma}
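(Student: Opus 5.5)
The whole statement is a perturbation of Lemma~\ref{lem:background-properties}. The point is that $h$ is supported in the fixed set $\Omega$, which stays a positive distance from the critical set of $K_\varepsilon$, and there $|\nabla K_\varepsilon|\gtrsim\varepsilon$ while $\|h\|_{C^m}\le\varepsilon^2$.

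\emph{Fixing the geometry.} Since $\overline\Omega\subset\{1<|y|<2\}$ is compact, I choose $\delta>0$ with $\overline\Omega\subset\{1+2\delta<|y|<2-2\delta\}$. I then set
\[
\mathcal V=\mathbf B_\delta\cup\{1-\delta<|y|<1+\delta\},
\]
which is independent of $\varepsilon$ and $h$ and lies in $\mathbf B_2$. Its image under inversion, $\{y:\,y/|y|^2\in\mathcal V\}$, contains a neighbourhood $\{1-\delta'<|y|<1+\delta'\}$ of the unit sphere. Because $\Omega$ lies in $\{1<|y|<2\}$, its inverted image lies in $\{1/2<|y|<1\}$; I shrink $\delta$ so that $\mathcal V$ also avoids the inverted image of $\Omega$. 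On $\mathcal V$ we then have $h=\tilde h=0$. Combined with \eqref{eq:kelvin}, this gives $K=\tilde K=K_\varepsilon$ on $\mathcal V$ and $K(0)=\tilde K(0)=2-\varepsilon$.

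\emph{Regularity.} Regularity of $K$ is immediate from Lemma~\ref{lem:background-properties}. For $\tilde K=K_\varepsilon+\tilde h$, the function $\tilde h$ is smooth and compactly supported away from $0$. Its $C^m$ norm is bounded by $C\|h\|_{C^m}$, where $C$ depends only on $\Omega$, because the inversion is a smooth diffeomorphism on these compact annuli.

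\emph{Gradient and critical set.} By \eqref{eq:polar-expansion}--\eqref{eq:equator-derivative-bounds} and $K_\varepsilon'(r)\neq0$ for $r\notin\{0,1\}$, the function $(K_\varepsilon-2)/\varepsilon$ depends only on $r$ and not on $\varepsilon$. Hence there is $\gamma_0>0$, independent of $\varepsilon$, with $|\nabla K_\varepsilon|\ge 2\gamma_0\varepsilon$ on $\overline{\mathbf B}_3\setminus\mathcal V$.

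I take $\gamma=\gamma_0$ and require $C\varepsilon^2<\gamma_0\varepsilon$ for small $\varepsilon$. Then
\[
|\nabla K|,\ |\nabla\tilde K|\ \ge\ 2\gamma_0\varepsilon-C\varepsilon^2\ \ge\ \gamma\varepsilon \quad\text{outside }\mathcal V,
\]
which proves \eqref{eq:small-gradient-region}.

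For the critical set: $K=K_\varepsilon$ near $\{0\}\cup\{|y|=1\}$ and off $\Omega$, while $\nabla K\neq0$ on $\Omega$. The bounds are elementary: $2-\varepsilon-\varepsilon^2\le K\le 2$ on $\Omega$, and $K<2$ there. The maximum set is $\{|y|=1\}$, which gives \eqref{eq:maxset} and $1/2\le K\le 5/2$.

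The $C^{m-1,1}$ bound of order $C\varepsilon$ follows since $K_\varepsilon-2=\varepsilon\cdot(\text{fixed }C^{m-1,1}\text{ function})$ and $\|h\|_{C^m}\le\varepsilon^2$.

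\emph{Flatness estimates.} This is the only step needing care, and I split into two regions.

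On $\mathcal V$ (and its inverted counterpart), $K=K_\varepsilon$. There \eqref{eq:perturbed-flatness} is \eqref{eq:flatness}, provided $\mathcal V\subset\mathcal U$, which I ensure by shrinking $\delta$.

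Off $\mathcal V$ in $\mathbf B_3$, I use $|\nabla K|\ge\gamma\varepsilon$ and $|\nabla K|\le C\varepsilon$. For $2\le j\le m$ this gives
\[
|D^jK|\le C\varepsilon+\varepsilon^2\le C\varepsilon = C\varepsilon^{(j-1)/(m-1)}\,\varepsilon^{(m-j)/(m-1)} \le C'\varepsilon^{(j-1)/(m-1)}|\nabla K|^{(m-j)/(m-1)}.
\]
When $j=m$, $D^mK$ is understood a.e. as the $C^{m-1,1}$ bound. The same argument applies to $\tilde K$, with $\mathbf B_3$ handled because $\tilde K=K_\varepsilon$ near $0$ and on $|y|\ge2$. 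Dividing by $\varepsilon$ gives the equivalent form \eqref{eq:normalized-flatness}.

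The main point to check is that every constant, including the lower bound for $|\nabla K_\varepsilon|$ off $\mathcal V$, scales exactly linearly in $\varepsilon$. This holds because $K_\varepsilon-2$ is $\varepsilon$ times a fixed profile.
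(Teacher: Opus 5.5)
Your proposal is correct and follows essentially the same route as the paper: you fix a neighbourhood $\mathcal V\subset\mathcal U\cap\mathbf B_2$ of $\{0\}\cup\{|y|=1\}$ that avoids $\overline\Omega$ and its inversion, and you use the $\varepsilon$-independence of $(K_\varepsilon-2)/\varepsilon$ to get $|\nabla K|,|\nabla\tilde K|\geq c\varepsilon$ off $\mathcal V$ against $\|h\|_{C^m}+\|\tilde h\|_{C^m}\leq C\varepsilon^2$; the flatness estimate then splits into $\mathcal V$, where it is \eqref{eq:flatness}, and its complement, where $|D^jK|\leq C\varepsilon$. The one step you left implicit, $K<2$ on $\Omega$, follows from the same scaling, since $\max_{\overline\Omega}K_\varepsilon\leq 2-c\varepsilon$ with $c$ independent of $\varepsilon$, and this beats the $\varepsilon^2$ perturbation; this is how the paper argues it.
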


\begin{proof}
By the definition of $\tilde{h}$,
\[
 \operatorname{supp}\tilde{h}
 \subset\left\{\frac{y}{|y|^2}:\,y\in\overline\Omega\right\}
 \subset\{1/2<|y|<1\}.
\]
Thus
\[
 \tilde{h}\in C_c^\infty(\R^N),\ \ 
 \|\tilde{h}\|_{C^m}\leq C\|h\|_{C^m}\leq C\varepsilon^2.
\]
With $\mathcal U$ as in Lemma~\ref{lem:background-properties}, choose
$\mathcal V$ such that
\[
 \begin{gathered}
 \{0\}\cup\{|y|=1\}\subset\mathcal V,\ \ 
 \overline{\mathcal V}\subset\mathcal U\cap\mathbf B_2, \ \ 
 \overline{\mathcal V}\cap
 \left(\overline\Omega\cup
 \left\{\frac{y}{|y|^2}:\,y\in\overline\Omega\right\}\right)=\varnothing.
 \end{gathered}
\]
It follows that $h=\tilde{h}=0$ and $K=\tilde{K}=K_{\varepsilon}$
on $\mathcal V$.

The function $(K_{\varepsilon}-2)/\varepsilon$ is independent of $\varepsilon$.
By Lemma~\ref{lem:background-properties}, $\min_{\overline{\mathbf B}_3\setminus\mathcal V}
       \frac{|\nabla K_{\varepsilon}|}{\varepsilon}\geq c>0$.
Together with $\|h\|_{C^m}+\|\tilde{h}\|_{C^m}\leq C\varepsilon^2$,
we have, for $\varepsilon>0$ small enough,
\[
 |\nabla K(y)|\geq\frac{c\varepsilon}{2},\ \ 
 |\nabla\tilde{K}(y)|\geq\frac{c\varepsilon}{2},
 \ \ y\in\overline{\mathbf B}_3\setminus\mathcal V.
\]
Taking $0<\gamma<c/2$, we then obtain that \eqref{eq:small-gradient-region} holds.
Since $K=K_{\varepsilon}$ on
$\mathcal V\cup(\R^N\setminus\mathbf B_3)$, we also obtain that
\[
 \{y\in\R^N:\,\nabla K(y)=0\}=\{0\}\cup\{|y|=1\}.
\]

It follows from \eqref{eq:background}, \eqref{eq:kelvin} and \eqref{eq:admissible} that 
\[
 \begin{aligned}
 &\|K-2\|_{C^{m-1,1}(\overline{\mathbf B}_3)}
 +\|\tilde{K}-2\|_{C^{m-1,1}(\overline{\mathbf B}_3)}\\
 &\quad\leq
 2\|K_{\varepsilon}-2\|_{C^{m-1,1}(\overline{\mathbf B}_3)}
 +C\|h\|_{C^m}+C\|\tilde{h}\|_{C^m}
 \leq C\varepsilon.
 \end{aligned}
\]
For $2\leq j\leq m$ and $y\in\overline{\mathbf B}_3\setminus\mathcal V$,
we deduce from \eqref{eq:background} and \eqref{eq:admissible} that $|D^jK(y)|+|D^j\tilde{K}(y)|\leq C\varepsilon$.
Since $|\nabla K(y)|,|\nabla\tilde{K}(y)|\geq c\varepsilon/2$, we obtain that
\[
 \begin{aligned}
 |D^jK(y)|
 &\leq C\varepsilon
 \leq C\varepsilon^{(j-1)/(m-1)}
       |\nabla K(y)|^{(m-j)/(m-1)},\\
 |D^j\tilde{K}(y)|
 &\leq C\varepsilon
 \leq C\varepsilon^{(j-1)/(m-1)}
       |\nabla\tilde{K}(y)|^{(m-j)/(m-1)}.
 \end{aligned}
\]
On $\mathcal V$, $K=\tilde{K}=K_{\varepsilon}$, so that
we deduce from \eqref{eq:flatness} that \eqref{eq:perturbed-flatness} holds.
Substituting $K=2+\varepsilon\mathbf{F}$ and $\tilde{K}=2+\varepsilon\tilde{\mathbf{F}}$
in \eqref{eq:perturbed-flatness}, we then derive that \eqref{eq:normalized-flatness} holds.

Finally, the positive number $\min_{\overline\Omega}\frac{2-K_{\varepsilon}}{\varepsilon}$
is independent of $\varepsilon$. For $\varepsilon>0$ small enough,
\[
 \begin{aligned}
 K&\leq\max_{\overline\Omega}K_{\varepsilon}+\varepsilon^2<2
       \,\ \ \text{on}\ \ \overline\Omega,\\
 K&=K_{\varepsilon}
       \,\ \ \text{on}\ \ \R^N\setminus\Omega.
 \end{aligned}
\]
Hence \eqref{eq:maxset} holds. Moreover,
\[
 \frac12\leq2-\varepsilon-\varepsilon^2
       \leq K\leq2+\varepsilon^2\leq\frac52
       \,\ \ \text{in}\ \ \R^N.
\]
This completes the proof.
\end{proof}

Next, we give a regularity result for solutions of \eqref{eq:euclidean}.

\begin{lemma}\label{lem:regularity}
Let $K$ satisfy \eqref{eq:admissible} and let $u$ be a solution of
\eqref{eq:euclidean}. Denote its Kelvin transform by
\begin{equation}\label{eq:kelvin-solution}
 \widehat u(y)=|y|^{2-N}u\left(\frac{y}{|y|^2}\right),
 \ \ y\ne0.
\end{equation}
Then $\widehat u\in D^{1,2}(\R^N)$ and $-\Delta\widehat u=\tilde{K}\widehat u^{2^*-1}$ in $\R^N$. Moreover, for any $0<\alpha<1$,
\begin{equation}\label{eq:solution-regularity}
 u,\widehat u\in C^{m+1,\alpha}_{\rm loc}(\R^N),\ \ 
 u,\widehat u>0\,\ \ \text{in}\ \ \R^N.
\end{equation}
\end{lemma}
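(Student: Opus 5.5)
We split the proof into three parts: the Kelvin transform, boundedness and positivity at the origin, and a bootstrap for higher regularity.

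\textbf{Kelvin transform.} The map $u\mapsto\widehat u$ is an isometry of $D^{1,2}(\R^N)$. We verify this first for $u\in C_c^\infty(\R^N\setminus\{0\})$ by the change of variables $x=y/|y|^2$, using the conformal invariance of the Dirichlet integral. The general case then follows by density, since $C_c^\infty(\R^N\setminus\{0\})$ is dense in $D^{1,2}(\R^N)$ for $N\geq3$. The classical identity
\[
\Delta\widehat u(y)=|y|^{-N-2}(\Delta u)\left(\frac{y}{|y|^2}\right)
\]
together with $|y|^{-N-2}=|y|^{(2-N)(2^*-1)}$ gives
\[
-\Delta\widehat u=\tilde K\widehat u^{2^*-1}\quad\text{in }\R^N\setminus\{0\}.
\]
To remove the origin, take a cutoff $\phi_\delta$ that vanishes near $0$ and is $1$ outside $\mathbf B_{2\delta}$. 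Since $\|\nabla\phi_\delta\|_{L^2}\to0$ for $N\geq3$ and $\widehat u\in L^{2^*}$, the weak formulation extends across $0$. By Lemma~\ref{lem:supported-properties}, $\tilde K$ is continuous and bounded, so the equation holds weakly on all of $\R^N$.

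\textbf{Boundedness and positivity.} Both $u$ and $\widehat u$ solve $-\Delta w=V w$ with $V=K w^{2^*-2}\in L^{N/2}$. The Brezis--Kato / Trudinger argument (Moser iteration exploiting the local smallness of $\|V\|_{L^{N/2}}$) gives $w\in L^q_{\rm loc}$ for every $q<\infty$. Standard $W^{2,q}$ estimates then give $w\in C^{1,\alpha}_{\rm loc}$, and in particular $\widehat u$ is bounded near $0$, so $u=O(|y|^{2-N})$ at infinity. Positivity comes from the strong maximum principle: $u\geq0$ and $-\Delta u\geq0$, so $u>0$ everywhere unless $u\equiv0$, which is excluded. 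The same holds for $\widehat u$; at $y=0$, the superharmonic nonnegative nontrivial function $\widehat u$ is strictly positive by the mean value inequality.

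\textbf{Higher regularity.} Lemma~\ref{lem:supported-properties} gives $K,\tilde K\in C^{m-1,1}(\R^N)\subset C^{m-1,\alpha}_{\rm loc}$. Since $w>0$ and $w\in C^{1,\alpha}$, the function $w^{2^*-1}$ is as smooth as $w$, because $t\mapsto t^{2^*-1}$ is smooth on $(0,\infty)$ and $w$ is bounded below on compact sets. The bootstrap runs as follows: if $w\in C^{k,\alpha}$ with $k\leq m-1$, then $Kw^{2^*-1}\in C^{\min(k,m-1),\alpha}$, and Schauder estimates give $w\in C^{\min(k,m-1)+2,\alpha}$. This iteration stops at $w\in C^{m+1,\alpha}_{\rm loc}$, limited by $K\in C^{m-1,\alpha}$.

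The main obstacle is the removable-singularity step at the origin for $\widehat u$, namely obtaining local boundedness there. This is handled by the Brezis--Kato estimate, which is applied only after the equation has been shown to hold weakly across $0$.
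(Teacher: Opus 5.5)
Your proof is correct and follows the same route as the paper. Both use a Brezis--Kato bootstrap to $L^q_{\rm loc}$ for all $q<\infty$, then Schauder estimates iterated until the regularity $K,\tilde K\in C^{m-1,1}$ caps the gain at $C^{m+1,\alpha}_{\rm loc}$, then the strong maximum principle for positivity. You also supply two steps the paper leaves implicit: the $D^{1,2}$-isometry of the Kelvin transform, and the capacity cutoff showing that $-\Delta\widehat u=\tilde K\widehat u^{2^*-1}$ holds weakly across $0$ (the paper uses this same cutoff argument later, in Lemma~\ref{lem:linearization}). One point to tighten: the pointwise identity $\Delta\widehat u(y)=|y|^{-N-2}(\Delta u)(y/|y|^2)$ must be read in the distributional sense, or invoked only after local regularity of $u$, since at that stage $u$ is merely a weak solution.
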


\begin{proof}
It follows from Lemma~\ref{lem:supported-properties} that $K,\tilde{K}\in
C^{m-1,1}(\R^N)$ and $K,\tilde{K}>0$ in $\R^N$. Since $u,\widehat u\in
L^{2^*}(\R^N)$, we apply a bootstrap argument as in Brezis--Kato \cite[proof of
Theorem~2.3(b)]{BK} to obtain that $u,\widehat u\in L^q_{\rm loc}(\R^N)$ for
all $1\leq q<\infty$ and $u,\widehat u\in
C^{2,\alpha}_{\rm loc}(\R^N)$. Since $K,\tilde{K}\in C^{m-1,\alpha}_{\rm
loc}(\R^N)$, we apply the bootstrap argument and the strong maximum principle to
obtain that \eqref{eq:solution-regularity} holds. This completes the proof.
\end{proof}

Finally, we recall the Kazdan--Warner identity for \eqref{eq:euclidean}. Define
for $|\bar y|=1$,
\begin{equation}\label{eq:equatorial-field}
 Y_{\bar y}(y)=\frac{1+|y|^2}{2}\bar y-(\bar y\cdot y)y.
\end{equation}

\begin{lemma}\label{lem:kw}
Let $K$ satisfy \eqref{eq:admissible} and let $u$ be a solution of
\eqref{eq:euclidean}. Then for $Y(y)\equiv
\tilde C\in\R^N$, $Y(y)=y$ or $Y=Y_{\bar y}$, the following Kazdan--Warner
identities hold:
\begin{equation}\label{eq:KW}
 \int_{\R^N}(Y\cdot\nabla K)u^{2^*}\,\mathrm{d}y=0,\ \ 
 \int_{\R^N}(Y\cdot\nabla\tilde{K})\widehat u^{2^*}\,\mathrm{d}y=0.
\end{equation}
\end{lemma}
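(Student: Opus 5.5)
The plan is to derive \eqref{eq:KW} from a Pohozaev-type identity. The vector fields $Y\equiv\tilde C$, $Y=y$ and $Y=Y_{\bar y}$ are conformal Killing fields of $\R^N$: they generate translations, dilations, and the conformal maps coming from rotations of $\Sph^N$. Accordingly, the divergence of $Y$ is a function, $\operatorname{div}Y=N\phi$ with $\phi=0$, $1$, $-\bar y\cdot y$ respectively, and $DY+DY^T=2\phi I$. I will work with $\phi$ in this general form and not check each case separately.

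I multiply $-\Delta u=Ku^{2^*-1}$ by the test function
\[
(Y\cdot\nabla u)+\tfrac{N-2}{2}\phi u
\]
and integrate over $\mathbf B_R$. The left side is handled by the standard computation. Writing $\partial_iY^j+\partial_jY^i=2\phi\delta_{ij}$, we get
\[
\int\nabla u\cdot\nabla(Y\cdot\nabla u)=\int\Bigl(\phi|\nabla u|^2-\tfrac N2\phi|\nabla u|^2\Bigr)+\text{bdry}.
\]
The term $\tfrac{N-2}{2}\int\phi|\nabla u|^2$ cancels it, leaving $-\tfrac{N-2}{4}\int u^2\Delta\phi$ plus boundary terms, and $\Delta\phi=0$ in all three cases.

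On the right side,
\[
\int Ku^{2^*-1}Y\cdot\nabla u=\tfrac1{2^*}\int KY\cdot\nabla(u^{2^*})=-\tfrac1{2^*}\int\bigl((Y\cdot\nabla K)+N\phi K\bigr)u^{2^*}+\text{bdry}.
\]
Since $\tfrac{N-2}{2}=\tfrac N{2^*}$, the term $\tfrac{N}{2^*}\int\phi Ku^{2^*}$ cancels. Hence
\[
\int_{\mathbf B_R}(Y\cdot\nabla K)u^{2^*}=\text{boundary terms on }\partial\mathbf B_R.
\]
These computations are justified because $u\in C^{2}$ and $K\in C^1$ by Lemma~\ref{lem:regularity} and Lemma~\ref{lem:supported-properties}.

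The main obstacle is showing that the boundary terms vanish as $R\to\infty$. The field $Y$ grows like $|y|^2$ for $Y_{\bar y}$, and $\phi$ grows like $|y|$, so $u\in D^{1,2}$ alone is not enough. Here I use the Kelvin transform. By Lemma~\ref{lem:regularity}, $\widehat u$ is $C^1$ near $0$ with $\widehat u(0)>0$, so
\[
u(y)=|y|^{2-N}\widehat u(y/|y|^2)\sim |y|^{2-N},\qquad |\nabla u|=O(|y|^{1-N}).
\]
The boundary integrands are then $O(|y|^2\cdot|y|^{2-2N})$ and $O(|y|^2\cdot|y|^{-2N})$, integrated over a surface of area $R^{N-1}$. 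This gives $O(R^{3-N})\to0$ for $N\geq4$. The integrals $\int|Y\cdot\nabla K|u^{2^*}$ converge absolutely because $\nabla K=O(|y|^{-m-1})$ at infinity by Lemma~\ref{lem:supported-properties}; in any case $|Y||\nabla K|u^{2^*}=O(|y|^{2-2N})$ is integrable. Letting $R\to\infty$ gives the first identity in \eqref{eq:KW}.

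For the second identity, the same argument applies verbatim to $\widehat u$, which solves $-\Delta\widehat u=\tilde K\widehat u^{2^*-1}$, with $\tilde K\in C^{m-1,1}$. The decay of $\widehat u$ at infinity follows from $u$ being $C^1$ and positive near $0$. This completes the plan.
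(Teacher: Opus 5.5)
Your proposal is correct and follows the paper's proof essentially line by line. Your test function $Y\cdot\nabla u+\frac{N-2}{2}\phi u$ is exactly the paper's $Y\cdot\nabla u+\frac{N-2}{2N}(\operatorname{div}Y)u$, and, as in the paper, the conformal Killing equation, $\Delta(\operatorname{div}Y)=0$ and the Kelvin-transform decay $u=O(|y|^{2-N})$, $\nabla u=O(|y|^{1-N})$ bound the boundary terms on $\partial\mathbf B_R$ by $O(R^{3-N})$, with the second identity obtained by repeating the argument for $\widehat u$ and $\tilde K$.
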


\begin{proof}
It follows from Lemma~\ref{lem:regularity} and \eqref{eq:kelvin-solution} that
\[
 u(y)=O(|y|^{2-N}),\ \ \nabla u(y)=O(|y|^{1-N})
       \ \ \text{as}\ \ |y|\to\infty.
\]
Moreover, $Y$ satisfies
\[
 \partial_jY_k+\partial_kY_j
       =\frac2N(\operatorname{div}Y)\delta_{jk},\ \ 
 \Delta(\operatorname{div}Y)=0
\]
for $Y(y)\equiv\tilde C\in\R^N$, $Y(y)=y$ or $Y=Y_{\bar y}$.
Multiplying \eqref{eq:euclidean} by $Y\cdot\nabla
u+\frac{N-2}{2N}(\operatorname{div}Y)u$ on $\mathbf B_R$ and integrating by
parts, we then obtain that
\[
 \int_{\mathbf B_R}(Y\cdot\nabla K)u^{2^*}\,\mathrm{d}y=O(R^{3-N}),
\]
since $Y(y)=O(|y|^2)$ and $\nabla(\operatorname{div}Y)(y)=O(1)$ as
$|y|\to\infty$.
Letting $R\to\infty$, we obtain the first identity in \eqref{eq:KW}. Applying the
same calculation to $\widehat u$ with $\tilde{K}$, we then obtain the second
identity in \eqref{eq:KW}.
\end{proof}

\section{A-priori estimates}

For $K$ satisfying \eqref{eq:admissible}, we prove uniform a-priori estimates
for all solutions $u$ of \eqref{eq:euclidean} when $\varepsilon$ is small enough.
To this end, we study blow-up sequences, beginning with the definitions and
results from Li \cite{Li95,Li96}.

For an open set $\mathcal O\subset\R^N$, consider
\begin{equation}\label{eq:local-critical}
 -\Delta u_i=K_i u_i^{2^*-1},\ \ u_i>0\,\ \ \text{in}\ \ \mathcal O,
\end{equation}
where $u_i\in C^2(\mathcal O)$ and $K_i\in C^1(\mathcal O)$ with $K_i>0$ in
$\mathcal O$.

\begin{definition}\label{def:blowup}
\leavevmode
We recall the definitions in (i)--(iii) from Li
\cite[Definitions~0.1--0.3]{Li95}.
\begin{enumerate}
\renewcommand{\labelenumi}{(\roman{enumi})}

\item
A point $\bar y\in\mathcal O$ is a blow-up point of $(u_i)$
if there are points $y_i\to\bar y$ such that $u_i(y_i)\to\infty$
as $i\to\infty$.

\item
A point $\bar y\in\mathcal O$ is an isolated blow-up point
of $(u_i)$ if there are local maximum points $y_i\to\bar y$ with
$u_i(y_i)\to\infty$ as $i\to\infty$ and constants $\bar r,\bar C>0$
independent of $i$ such that
$\bar r<\operatorname{dist}(\bar y,\partial\mathcal O)$ and
\begin{equation}\label{eq:isolated-blowup}
 u_i(y)\leq\bar C|y-y_i|^{-(N-2)/2}
 \ \ \text{for}\ \ 0<|y-y_i|<\bar r.
\end{equation}

\item
An isolated blow-up point $\bar y$ is simple if there is
$\rho>0$ independent of $i$ such that
$r^{(N-2)/2}\overline u_i(r)$ has precisely one critical point in
$(0,\rho)$ for all large $i$, where
\[
 \overline u_i(r):=\frac{1}{|\partial\mathbf B_r|}
                  \int_{\partial\mathbf B_r(y_i)}u_i\,\mathrm{d}S,
\]
and $y_i$ are the local maximum points in (ii).

\item
We extend the definitions in (i)--(iii) to $u_i$ at $\infty$ by applying them
to $(\widehat u_i)$ at $0$.
\end{enumerate}
\end{definition}

\Needspace{6\baselineskip}
We also use the following flatness condition $(*)_\beta$ from Li \cite[Definition~0.4]{Li95}.

\begin{definition}\label{def:li-flatness}
Let $\beta\geq2$ be an integer and $f\in C^{\beta-1,1}(\mathcal O)$. We say that $f$ satisfies condition $(*)_\beta$
in $\mathcal O$ if there is a constant $L>0$ such that
$\|\nabla f\|_{C^0(\mathcal O)}\leq L$ and
\begin{align*}
 |D^jf(y)|\leq L|\nabla f(y)|^{(\beta-j)/(\beta-1)},\ \ 
 2\leq j\leq\beta,\ \ y\in\mathcal O,\ \ \nabla f(y)\ne0.
\end{align*}
\end{definition}

We recall the following estimates from Li
\cite[Propositions~1.5--1.6]{Li96}.

\begin{lemma}\label{lem:li-estimates}
Suppose that $u_i$ solves \eqref{eq:local-critical} in $\mathbf B_{2r}(\bar y)$,
where $r>0$ and
\begin{equation}\label{eq:li-local-bounds}
 K_i\geq A^{-1},\ \ 
 \|K_i\|_{C^2(\mathbf B_{2r}(\bar y))}\leq A
\end{equation}
with $A>0$ independent of $i$. Let $y_i\to\bar y$ be local maximum points with
$M_i=u_i(y_i)\to\infty$ as $i\to\infty$, and suppose that
\[
 u_i(y)\leq\bar C|y-y_i|^{-(N-2)/2}
 \ \ \text{for}\ \ y\in\mathbf B_{2r}(\bar y)\setminus\{y_i\},
\]
where $\bar C>0$ is independent of $i$. Suppose also that $\bar y$ is a simple
blow-up point. Then, for all large $i$ and $0<|y-y_i|\leq r$,
\begin{equation}\label{eq:bubble-bounds}
 cM_i\bigl(1+CM_i^{4/(N-2)}|y-y_i|^2\bigr)^{-(N-2)/2}
 \leq u_i(y)\leq CM_i^{-1}|y-y_i|^{2-N},
\end{equation}
where $c,C>0$ are independent of $i$.
\end{lemma}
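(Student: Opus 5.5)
This lemma is quoted from Li \cite[Propositions~1.5--1.6]{Li96}, so the plan is to reproduce Li's argument in three steps: a rescaling step that gives the lower bound, a Harnack-type step that gives the upper bound on a fixed sphere, and a Green-function step that closes the upper bound.

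\emph{Lower bound.} Set $\ell_i=M_i^{-2/(N-2)}$ and define
\[
v_i(z)=M_i^{-1}u_i(y_i+\ell_i z).
\]
Then $v_i(0)=1$, $0$ is a local maximum of $v_i$, and $v_i(z)\le \bar C|z|^{-(N-2)/2}$. Since $K_i$ is bounded in $C^2$ and bounded below by \eqref{eq:li-local-bounds}, we may assume $K_i\to K_\infty$ in $C^1$ along a subsequence. The isolated blow-up bound together with Harnack's inequality gives local bounds on $v_i$. Elliptic estimates then yield $v_i\to U$ in $C^2_{\rm loc}(\R^N)$, where $-\Delta U=K_\infty(\bar y)U^{2^*-1}$ and $U(0)=1=\max U$. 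By the Caffarelli--Gidas--Spruck classification, $U$ is the standard bubble
\[
U(z)=\bigl(1+\lambda|z|^2\bigr)^{-(N-2)/2}.
\]
This gives the lower bound in \eqref{eq:bubble-bounds} on $|y-y_i|\le R_i\ell_i$ with $R_i\to\infty$. Beyond that scale I extend it by the maximum principle: $u_i$ is superharmonic, so comparing it with $cM_i^{-1}|y-y_i|^{2-N}$ on the annulus $R\ell_i\le|y-y_i|\le r$ works, because $M_i(1+M_i^{4/(N-2)}|y-y_i|^2)^{-(N-2)/2}\sim M_i^{-1}|y-y_i|^{2-N}$ there.

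\emph{Upper bound, first reduction.} Simplicity is used here. The function $r^{(N-2)/2}\overline u_i(r)$ has its unique critical point at $r\sim\ell_i$ coming from the bubble, so it is decreasing on $(R_i\ell_i,\rho)$. Combined with Harnack's inequality on annuli, this gives
\[
u_i(y)\le C\,\overline u_i(|y-y_i|)\le C\bigl(\text{bubble-scale value}\bigr)^{1-\delta}.
\]
More precisely, I aim for the weak estimate $u_i(y)\le CM_i^{-\lambda_1}|y-y_i|^{2-N+\delta}$ for some small $\delta>0$. Li obtains this by comparing with the supersolutions $M_i^{-\lambda_1}|y-y_i|^{-(N-2)+\delta}$ and $\varepsilon |y-y_i|^{-\delta}$ on the annulus $R_i\ell_i\le|y-y_i|\le\rho$. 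The linearized potential $K_iu_i^{4/(N-2)}$ is small there, at most $o(1)|y-y_i|^{-2}$ by the decay of $r^{(N-2)/2}\overline u_i$, so these functions are indeed supersolutions.

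\emph{Upper bound, sharp form.} Next I show $M_i u_i(y_i+e)$ stays bounded for $|e|=r$. Suppose it does not. Then I normalize $w_i=u_i/u_i(y_i+e)$; by Harnack, $w_i$ converges away from $y_i$ to
\[
h=a|y-\bar y|^{2-N}+b(y),\qquad a\ge0,\ b \text{ harmonic}.
\]
The Pohozaev identity on $\mathbf B_\sigma(y_i)$ is where the main obstacle lies. It has a boundary term that tends to $-c\,ab(\bar y)<0$ when $a>0$ and $b(\bar y)>0$, while the interior terms involving $\nabla K_i$ must be shown to be $o(1)$ after multiplying by $u_i(y_i+e)^{-2}$. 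Controlling these needs the weak estimate together with a bound on $|y_i-\text{center}|$; Li handles this using $C^2$ control of $K_i$ and the $\delta$-improvement, with $\nabla K_i(y_i)$ estimated via an auxiliary Pohozaev identity. A contradiction then forces $M_iu_i(y_i+e)\le C$. Finally, the representation
\[
u_i(y)=\int G\,K_iu_i^{2^*-1}+\text{harmonic},
\]
with Green's function $G$, gives $u_i\le CM_i^{-1}|y-y_i|^{2-N}$ for $0<|y-y_i|\le r$.

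Since the lemma is cited verbatim from \cite{Li96}, the proof in the paper would just verify its hypotheses and refer there. The plan above is the content I would reconstruct if needed, with the Pohozaev step as the main difficulty.
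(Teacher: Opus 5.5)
The paper gives no argument for this lemma and simply quotes Li, so deferring to Li is exactly what it does. The problem is the reconstruction you offer as the content behind the citation: its central step, $M_iu_i(y_i+e)\leq C$ on a fixed sphere, does not go through as you describe it.

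Your Pohozaev contradiction needs $b(\bar y)>0$, and nothing in your setup gives that. For small $s$ one has $b(\bar y)=\overline h(s)-as^{2-N}$, which has no sign a priori. A positive regular part appears only in the separate argument that isolated blow-up points are simple, where it comes from a second critical point of $r^{(N-2)/2}\overline h$.

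The route also needs the $\nabla K_i$ terms to be $o(u_i(y_i+e)^2)$, and with only $C^2$ bounds this cannot be shown for $N\geq5$. Even if $\nabla K_i(y_i)$ were perfectly controlled, the second-order part $\frac1N\Delta K_i(y_i)\int_{\mathbf B_\sigma(y_i)}|y-y_i|^2u_i^{2^*}\,\mathrm dy$ has size $|\Delta K_i(y_i)|M_i^{-4/(N-2)}$. Your weak estimate only gives $u_i(y_i+e)^2\leq CM_i^{-2\lambda}$, with $2\lambda>4/(N-2)$ for $N\geq5$ and small $\delta$. After dividing by $u_i(y_i+e)^2$, this term is not small unless $\Delta K_i(y_i)\to0$ at a rate you do not control.

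The missing idea is a flux argument, which uses only ingredients you already have. With $\xi_i=u_i/u_i(y_i+e)\to h=a|y-\bar y|^{2-N}+b$, simplicity forces $a>0$. Indeed, $r^{(N-2)/2}\overline\xi_i(r)$ is decreasing on $(R_i\ell_i,\rho)$, so $r^{(N-2)/2}\overline h(r)$ is nonincreasing on $(0,\rho)$. That is impossible if $h=b$ is harmonic and positive at $\bar y$, since then $r^{(N-2)/2}\overline h(r)=r^{(N-2)/2}b(\bar y)$ increases. On the other hand, integrating the equation for $\xi_i$ over $\mathbf B_\sigma(y_i)$ gives
\[
 -\int_{\partial\mathbf B_\sigma(y_i)}\partial_\nu\xi_i\,\mathrm dS
 =\frac{1}{u_i(y_i+e)}\int_{\mathbf B_\sigma(y_i)}K_iu_i^{2^*-1}\,\mathrm dy
 \leq\frac{C}{M_iu_i(y_i+e)}\longrightarrow0 .
\]
The bound holds because the bubble core contributes $O(M_i^{-1})$ to $\int u_i^{2^*-1}$. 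The annulus $R_i\ell_i\leq|y-y_i|\leq\sigma$ contributes at most $CR_i^{-2+\delta(2^*-1)}M_i^{-1}$ by the weak estimate with $\lambda=(N-2-2\delta)/(N-2)$.

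The left side tends to $-\int_{\partial\mathbf B_\sigma(\bar y)}\partial_\nu h\,\mathrm dS=(N-2)|\partial\mathbf B_1|a>0$, which is a contradiction. No information on $\nabla K_i$ is needed, and your Green-representation step then finishes the upper bound.

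A smaller point concerns the lower bound. The harmonic barrier $cM_i^{-1}|y-y_i|^{2-N}$ requires a lower bound for $u_i$ on the outer sphere, which you do not have at that stage. Use instead $cM_i^{-1}\bigl(|y-y_i|^{2-N}-(3r/2)^{2-N}\bigr)$ on $R\ell_i\leq|y-y_i|\leq 3r/2$, which vanishes on the outer sphere. Note that $\mathbf B_{3r/2}(y_i)\subset\mathbf B_{2r}(\bar y)$ for large $i$.
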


By stereographic projection and conformal invariance, we obtain the
following result from Li \cite[Remark~0.1, Proposition~3.1 and Theorem~4.2]{Li95}.

\begin{lemma}\label{lem:li-global}
Let $N\geq4$. Suppose that $u_i\in D^{1,2}(\R^N)$ solves
\eqref{eq:local-critical} in $\R^N$ and that
$K_i,\tilde{K}_i\in C^2(\R^N)$ satisfy
\begin{equation}\label{eq:li-global-bounds}
 \begin{gathered}
 K_i,\tilde{K}_i\geq A^{-1}\,\ \ \text{in}\ \ \mathbf B_3,\\
 \|K_i\|_{C^2(\mathbf B_3)}
 +\|\tilde{K}_i\|_{C^2(\mathbf B_3)}\leq A.
 \end{gathered}
\end{equation}
where $A>0$ is independent of $i$. If both $(K_i)$ and
$(\tilde{K}_i)$ satisfy $(*)_{N-2}$ in $\mathbf B_3$ with constants
$L_i>0$ such that $L_i\to0$ as $i\to\infty$, then, up to a
subsequence, either
\[
 \sup_{y\in\R^N}(1+|y|^2)^{(N-2)/2}u_i(y)\leq C
\]
with $C>0$ independent of $i$, or $(u_i)$ has precisely one blow-up point
$\bar y\in\R^N\cup\{\infty\}$, which is isolated and simple.

\end{lemma}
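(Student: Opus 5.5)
The plan is to reduce the statement to Li's results on $\Sph^N$ through the stereographic projection $\pi_N$. For each $i$, set
\[
v_i(x)=\Bigl(\frac{1+|y|^2}{2}\Bigr)^{(N-2)/2}u_i(y),\qquad y=\pi_N(x),\qquad \mathbf K_i=\frac{4(N-1)}{N-2}K_i\circ\pi_N .
\]
By conformal invariance, $v_i$ is a positive solution of \eqref{eq:spherical} with $\mathbf K=\mathbf K_i$ away from the north pole.

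The first step is to show that $v_i$ extends across the north pole to a positive $H^1(\Sph^N)$ solution. Near the north pole, in the chart $y\mapsto y/|y|^2$, $v_i$ is a fixed smooth positive multiple of $\widehat u_i$. The argument of Lemma~\ref{lem:regularity}, which uses only $u_i\in D^{1,2}$ and the bounds on $\tilde K_i$, then gives $\widehat u_i\in D^{1,2}\cap C^2_{\rm loc}$ solving $-\Delta\widehat u_i=\tilde K_i\widehat u_i^{2^*-1}$. So $v_i$ is a genuine smooth positive solution on the whole sphere.

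The second step is to check Li's hypotheses uniformly in $i$, using the two charts $\mathbf B_3$ and its inversion. These cover $\Sph^N$ by the two compact sets $\pi_N^{-1}(\overline{\mathbf B}_2)$ and its antipodal counterpart. By \eqref{eq:li-global-bounds}, $\mathbf K_i$ is bounded below and bounded in $C^2(\Sph^N)$ uniformly in $i$.

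The main point of this step is the transfer of $(*)_{N-2}$ to $\Sph^N$. Write $\beta=N-2$ and let $\Phi$ be a fixed smooth chart map with uniformly bounded derivatives on the relevant compact set. For $f\circ\Phi$, the chain rule (Fa\`a di Bruno) expresses $D^j(f\circ\Phi)$ as a sum of terms $D^kf\cdot\prod D^{l}\Phi$ with $1\le k\le j$. Each such term is bounded by
\[
CL_i|\nabla f|^{(\beta-k)/(\beta-1)}\le CL_i|\nabla f|^{(\beta-j)/(\beta-1)}\,\max\bigl(1,|\nabla f|\bigr)^{(j-k)/(\beta-1)} ,
\]
using $|\nabla f|\le L_i\le 1$ for large $i$. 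Moreover $|\nabla(f\circ\Phi)|$ is comparable to $|\nabla f|$ because $D\Phi$ is uniformly invertible. Hence $\mathbf K_i$ satisfies $(*)_{N-2}$ on $\Sph^N$, in each chart, with constant $CL_i\to0$. This chart bookkeeping is the step I expect to require the most care: the constant $C$ must depend only on the two fixed charts and on $A$, not on $i$. It is also what makes the hypothesis on both $K_i$ and $\tilde K_i$ necessary.

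With these hypotheses verified, I would apply Li \cite[Theorem~4.2 and Proposition~3.1]{Li95}, together with \cite[Remark~0.1]{Li95}, to $(v_i)$ for $N\ge4$. The flatness order $\beta=N-2$ with constant tending to zero is exactly the borderline case allowed there, and it yields the following dichotomy after passing to a subsequence. Either $\|v_i\|_{L^\infty(\Sph^N)}\le C$, or $(v_i)$ has blow-up points that are all isolated and simple. In the second case, smallness of $L_i$ excludes two or more blow-up points, since the interaction terms cannot be balanced by the Pohozaev/Kazdan--Warner contribution of $\nabla\mathbf K_i$. So there is precisely one blow-up point $\bar x\in\Sph^N$.

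The last step translates back to $\R^N$. A uniform bound on $v_i$ is exactly $\sup_{\R^N}(1+|y|^2)^{(N-2)/2}u_i\le C$. A single isolated simple blow-up point $\bar x$ corresponds to $\bar y=\pi_N(\bar x)\in\R^N$ if $\bar x$ is not the north pole, and to $\bar y=\infty$ otherwise. The definitions of isolated and simple blow-up points are preserved under multiplication by the fixed smooth positive conformal factor and the fixed chart change. At $\infty$ this is the content of Definition~\ref{def:blowup}(iv) applied to $\widehat u_i$. Applying the same reasoning in the two charts completes the argument.
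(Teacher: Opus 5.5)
Your proposal is correct and follows essentially the paper's route: the paper obtains this lemma directly from Li's Remark~0.1, Proposition~3.1 and Theorem~4.2 via stereographic projection and conformal invariance, and you make explicit the bookkeeping it leaves implicit (removability at the pole, uniform $C^2$ bounds on $\Sph^N$, and the chart transfer of $(*)_{N-2}$). One small slip in that transfer: the $k=1$ Fa\`a di Bruno term $\nabla f\cdot D^j\Phi$ is only bounded by $C|\nabla f|\leq CL_i^{(j-1)/(N-3)}|\nabla f|^{(N-2-j)/(N-3)}$, not by $CL_i|\nabla f|^{(N-2-j)/(N-3)}$, so the transferred constant is $CL_i^{1/(N-3)}$ rather than $CL_i$; it still tends to zero, so your conclusion is unaffected.
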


We now apply Lemmas~\ref{lem:li-estimates} and~\ref{lem:li-global} to describe
a blow-up sequence. Let $\varepsilon_i\to0$ as $i\to\infty$ and let
\[
 K_i=K_{\varepsilon_i}+h_i,\ \ 
 h_i\in C_c^\infty(\Omega),\ \ 
 \|h_i\|_{C^m}\leq\varepsilon_i^2,
\]
as in \eqref{eq:admissible}. Let $u_i$ solve \eqref{eq:euclidean} with
coefficient $K_i$.

\begin{proposition}\label{prop:blowup-sequence}
Suppose that
\[
 \sup_{y\in\R^N}(1+|y|^2)^{(N-2)/2}u_i(y)\to\infty
 \ \ \text{as}\ \ i\to\infty.
\]
Up to a subsequence, $(u_i)$ has precisely one blow-up point
$\bar y\in\R^N\cup\{\infty\}$, which is isolated and simple. After replacing
$(u_i,K_i)$ by $(\widehat u_i,\tilde{K}_i)$ when $\bar y=\infty$, we may
assume that $\bar y\in\R^N$. There are local maximum points $y_i\to\bar y$ such that
\begin{equation*}
 M_i=u_i(y_i)\to\infty
 \ \ \text{as}\ \ i\to\infty
\end{equation*}
and \eqref{eq:bubble-bounds} holds for some fixed $r>0$. For any fixed
neighborhood $\mathtt{V}$ of $\bar y$,
\begin{equation}\label{eq:global-tail-bound}
 u_i(y)\leq C_{\mathtt{V}} M_i^{-1}(1+|y|^2)^{-(N-2)/2}
 \, \ \text{for any}\ \ y\in\R^N\setminus \mathtt{V},
\end{equation}
and consequently
\begin{equation}\label{eq:tail-integral}
 \int_{\R^N\setminus \mathtt{V}}u_i^{2^*}\,\mathrm{d}y\leq C_{\mathtt{V}} M_i^{-2^*}.
\end{equation}
The constants in these estimates are independent of $i$.
\end{proposition}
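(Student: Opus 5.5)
We plan to apply Lemma~\ref{lem:li-global} to the sequence $(u_i)$ with coefficients $K_i$, after verifying its hypotheses through Lemma~\ref{lem:supported-properties}. Positivity, $K_i,\tilde K_i\geq 1/2$ and the bound $\|K_i-2\|_{C^{m-1,1}(\overline{\mathbf B}_3)}+\|\tilde K_i-2\|_{C^{m-1,1}(\overline{\mathbf B}_3)}\leq C\varepsilon_i$ (recall $m=N-2\geq2$) give \eqref{eq:li-global-bounds} with a fixed $A$. For condition $(*)_{N-2}$ we use \eqref{eq:perturbed-flatness}. Since $|\nabla K_i|\leq C\varepsilon_i$ on $\mathbf B_3$, we have $\|\nabla K_i\|_{C^0}\leq C\varepsilon_i$. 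For $2\leq j\leq m$, $|D^jK_i|\leq C\varepsilon_i^{(j-1)/(m-1)}|\nabla K_i|^{(m-j)/(m-1)}$, and the prefactor $C\varepsilon_i^{(j-1)/(m-1)}\leq C\varepsilon_i^{1/(m-1)}$. Hence $(*)_{N-2}$ holds with $L_i=C\varepsilon_i^{1/(m-1)}\to0$, and likewise for $\tilde K_i$. Since the sup hypothesis excludes the first alternative of Lemma~\ref{lem:li-global}, up to a subsequence there is exactly one blow-up point $\bar y\in\R^N\cup\{\infty\}$, isolated and simple.

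If $\bar y=\infty$, we replace $(u_i,K_i)$ by $(\widehat u_i,\tilde K_i)$. By Lemma~\ref{lem:regularity} this is again a solution in $D^{1,2}$, and by Definition~\ref{def:blowup}(iv) its blow-up point is $0$, isolated and simple. The roles of $K$ and $\tilde K$ are symmetric in all the bounds above, and the weight $(1+|y|^2)^{(N-2)/2}u$ is invariant under the Kelvin transform up to a constant factor. So we may assume $\bar y\in\R^N$. Definition~\ref{def:blowup}(ii) supplies local maxima $y_i\to\bar y$ with $M_i\to\infty$ and the upper bound \eqref{eq:isolated-blowup} on $\mathbf B_{\bar r}(y_i)$. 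Choosing $r<\bar r/3$ gives this bound on $\mathbf B_{2r}(\bar y)$ for large $i$, and \eqref{eq:li-local-bounds} holds. Lemma~\ref{lem:li-estimates} then yields \eqref{eq:bubble-bounds}.

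For \eqref{eq:global-tail-bound}, which is the main step, we first fix the neighborhood $\mathtt V$ and may shrink it to $\mathbf B_{r}(\bar y)$. On the sphere $|y-y_i|=r$, \eqref{eq:bubble-bounds} gives $u_i\leq CM_i^{-1}$. We then need to propagate this bound to all of $\R^N\setminus\mathtt V$, including infinity, with decay. Because there is no other blow-up point (in particular none at $\infty$), $u_i$ and $\widehat u_i$ are locally uniformly bounded away from $\bar y$. Harnack's inequality on compact subsets of $(\R^N\cup\{\infty\})\setminus\{\bar y\}$, applied to $u_i$ and to $\widehat u_i$ near $\infty$ with $K_iu_i^{2^*-2}$ bounded as potential, gives $\max_{\mathcal K}u_i\leq C\min_{\mathcal K}u_i$ on connected compacts $\mathcal K$. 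Hence $\sup_{\R^N\setminus\mathtt V}(1+|y|^2)^{(N-2)/2}u_i\leq C\min$ over a compact annulus. To link this minimum to $M_i^{-1}$, we compare through the sphere $\partial\mathbf B_r(y_i)$, which lies in a common connected compact, so the supremum is at most $C\max_{\partial\mathbf B_r(y_i)}u_i\leq CM_i^{-1}$.

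The delicate point is that Harnack requires the whole region to stay a fixed distance from $\bar y$. We would handle this with a chain of overlapping balls on the sphere (via stereographic projection), using $\widehat u_i$ for the part near infinity; the weight $(1+|y|^2)^{-(N-2)/2}$ is exactly what this translation produces. Finally, \eqref{eq:tail-integral} follows by raising \eqref{eq:global-tail-bound} to the power $2^*$ and integrating, since $(1+|y|^2)^{-N}$ is integrable.
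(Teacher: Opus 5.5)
Your proposal is correct and follows essentially the same route as the paper: you verify the hypotheses of Lemma~\ref{lem:li-global} with $L_i=C\varepsilon_i^{1/(m-1)}$, Kelvin-transform when $\bar y=\infty$, and invoke Lemma~\ref{lem:li-estimates} for \eqref{eq:bubble-bounds}. You then propagate $u_i\le CM_i^{-1}$ from a small sphere around $\bar y$ by a Harnack chain of overlapping balls, using $\widehat u_i$ near infinity to produce the weight $(1+|y|^2)^{-(N-2)/2}$. The details you leave implicit are routine: you should take a ball inside $\mathtt V$ by decreasing $r$ rather than "shrinking $\mathtt V$ to $\mathbf B_r(\bar y)$", and you should check \eqref{eq:li-local-bounds} on $\mathbf B_{2r}(\bar y)$ even when $\bar y\notin\mathbf B_3$.
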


\begin{proof}
By Lemma~\ref{lem:supported-properties}, $K_i$ and $\tilde{K}_i$ satisfy
\eqref{eq:li-global-bounds}. Moreover, using the bounds for $\nabla K_i,\nabla\tilde{K}_i$, we then deduce from
\eqref{eq:perturbed-flatness} that $(*)_m$ holds in $\mathbf B_3$ with
$L_i=C\varepsilon_i^{1/(m-1)}$.
Since $m=N-2$ and $\varepsilon_i\to0$, Lemma~\ref{lem:li-global} yields a
unique isolated simple blow-up point $\bar y$. Apply the Kelvin transform
if $\bar y=\infty$, and choose the local maximum points $y_i\to\bar y$
in Definition~\ref{def:blowup}.
By \eqref{eq:li-global-bounds} and the Kelvin transform, $K_i$ satisfies
\eqref{eq:li-local-bounds} on any fixed ball $\mathbf B_{2r}(\bar y)$,
with $A$ depending on $r$ and $\bar y$. For $r>0$ small, \eqref{eq:isolated-blowup}
holds on $\mathbf B_{2r}(\bar y)$, so that we can apply
Lemma~\ref{lem:li-estimates} to obtain that \eqref{eq:bubble-bounds} holds.

It remains to prove \eqref{eq:global-tail-bound}.
Using the upper bound in \eqref{eq:bubble-bounds}, we then obtain that $u_i\leq CM_i^{-1}$
on $\{r/4\leq|y-\bar y|\leq r/2\}$ for all large $i$.
Since the blow-up point in
$\R^N\cup\{\infty\}$ is unique, we have uniform bounds for $u_i$ on compact subsets of
$\R^N\setminus\{\bar y\}$ and for $\widehat u_i$ near $0$.
On any fixed ball $\overline{\mathbf B}_{2\rho}(\xi)\subset
\R^N\setminus\{\bar y\}$, the functions $K_i u_i^{2^*-2}$ are uniformly
bounded. Applying Harnack's inequality to
\eqref{eq:local-critical}, we obtain that
\begin{equation}\label{eq:tail-harnack}
 \sup_{\mathbf B_\rho(\xi)}u_i
       \leq C\inf_{\mathbf B_\rho(\xi)}u_i,
\end{equation}
where $C$ is independent of $i$. If another fixed ball satisfies
$\overline{\mathbf B}_{2\sigma}(\eta)\subset\R^N\setminus\{\bar y\}$ and
$\mathbf B_\rho(\xi)\cap\mathbf B_\sigma(\eta)\ne\varnothing$, then
\begin{equation}\label{eq:overlap-harnack}
 \sup_{\mathbf B_\sigma(\eta)}u_i
       \leq C\inf_{\mathbf B_\sigma(\eta)}u_i
       \leq C\sup_{\mathbf B_\rho(\xi)}u_i.
\end{equation}
Let $\mathcal E\subset\R^N\setminus\{\bar y\}$ be a fixed compact set.
Choose finitely many fixed balls $\mathbf B_{\rho_j}(\xi_j)$,
$1\leq j\leq J$, covering $\mathcal E$, each intersecting the next, such that
\[
 \mathbf B_{\rho_1}(\xi_1)\subset\{r/4<|y-\bar y|<r/2\},\ \ 
 \overline{\mathbf B}_{2\rho_j}(\xi_j)\subset\R^N\setminus\{\bar y\}
       \ \ (1\leq j\leq J).
\]
Since $u_i\leq CM_i^{-1}$ on $\mathbf B_{\rho_1}(\xi_1)$, we apply
\eqref{eq:tail-harnack} and \eqref{eq:overlap-harnack} successively to
$\mathbf B_{\rho_1}(\xi_1),\ldots,\mathbf B_{\rho_J}(\xi_J)$ to obtain that
$\sup_{\mathcal E}u_i\leq C_{\mathcal E}M_i^{-1}$, where
$C_{\mathcal E}$ is independent of $i$.

Choose $R>2(1+|\bar y|)$. The functions $\widehat u_i$ and
$\tilde{K}_i\widehat u_i^{2^*-2}$ are uniformly bounded on $\mathbf B_{2/R}$.
Harnack's inequality yields that
\[
 \sup_{\mathbf B_{1/R}}\widehat u_i
       \leq C\inf_{\mathbf B_{1/R}}\widehat u_i
       \leq CM_i^{-1},
\]
where the last inequality follows from $\sup_{\mathcal E}u_i\leq C_{\mathcal E}M_i^{-1}$ with $\mathcal E=\partial \mathbf B_{2R}$
and \eqref{eq:kelvin-solution}. Thus
$u_i(y)\leq CM_i^{-1}|y|^{2-N}$ for $|y|\geq R$.
Together with the bound for $u_i$ on
$\overline{\mathbf B}_R\setminus \mathtt{V}$, this proves
\eqref{eq:global-tail-bound}.
Since $\int_{\R^N}(1+|y|^2)^{-N}\,\mathrm{d}y<\infty$,
\eqref{eq:tail-integral} follows.
\end{proof}

We next use the Kazdan--Warner identity \eqref{eq:KW} to show that the blow-up
point $\bar y$ in Proposition~\ref{prop:blowup-sequence} satisfies
$\bar y=0$ or $|\bar y|=1$, and to derive the local integral estimate
\eqref{eq:kw-local-upper}. Set
\[
 \begin{aligned}
 \mathbf{F}_i(y)&=\frac{K_i(y)-2}{\varepsilon_i}
       =\frac{K_{\varepsilon_i}(y)-2+h_i(y)}{\varepsilon_i},\\
 \tilde{\mathbf{F}}_i(y)&=\frac{\tilde{K}_i(y)-2}{\varepsilon_i}
       =\frac{K_{\varepsilon_i}(y)-2+\tilde{h}_i(y)}{\varepsilon_i}.
 \end{aligned}
\]
After a Kelvin transform, we replace $(u_i,K_i,\mathbf{F}_i)$ by
$(\widehat u_i,\tilde K_i,\tilde{\mathbf{F}}_i)$ and keep the notation
$(u_i,K_i,\mathbf{F}_i)$.

\begin{lemma}\label{lem:kw-local}
Let $\bar y\in\R^N$, $y_i$ and $M_i$ be as in Proposition~\ref{prop:blowup-sequence}.
Then $\bar y=0$ or $|\bar y|=1$. Write $t=(t_1,\ldots,t_N)$.
If $\bar y=0$, take $Y(y)=y$ and $t(y)=y$. If
$|\bar y|=1$, take $Y=Y_{\bar y}$ defined by \eqref{eq:equatorial-field} and
smooth local coordinates $t=t(y)$ with $t(\bar y)=0$ and $t_N(y)=|y|-1$. Then
there is a fixed $\rho>0$ such that $\mathtt{V}_\rho=\{y:\,|t(y)|<\rho\}\subset\mathcal V$,
the estimates \eqref{eq:bubble-bounds} hold on $\mathtt{V}_\rho$ for all large $i$, and
\[
 K_i=K_{\varepsilon_i},\ \ 
 Y\cdot\nabla K_i\geq c\varepsilon_i|t_N(y)|^m\geq0
 \,\ \ \text{on}\ \ \mathtt{V}_\rho.
\]
Moreover,
\begin{equation}\label{eq:kw-local-upper}
 0\leq\int_{\mathtt{V}_\rho}(Y\cdot\nabla K_i)u_i^{2^*}\,\mathrm{d}y
       \leq C\varepsilon_i M_i^{-2^*},
\end{equation}
where $c,C>0$ are independent of $i$.
\end{lemma}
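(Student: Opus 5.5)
First I locate $\bar y$ at a critical point of $K$; then I fix the neighborhood $\mathtt V_\rho$ and the sign of $Y\cdot\nabla K_i$ there; last, I split the Kazdan--Warner identity into a near part and a far part.

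\textbf{Step 1: $\nabla K$ vanishes at $\bar y$.} Apply the first identity in \eqref{eq:KW} with constant fields $Y\equiv e\in\R^N$, which gives $\int_{\R^N}(e\cdot\nabla K_i)u_i^{2^*}\,\mathrm{d}y=0$. Divide by $\varepsilon_i$ and write $\nabla K_i=\varepsilon_i\nabla\mathbf F_i$. By Lemma~\ref{lem:supported-properties}, $\nabla\mathbf F_i$ is uniformly bounded on $\mathbf B_3$. Away from $\mathbf B_3$, $\nabla K_i=\nabla K_{\varepsilon_i}$ is bounded by $C\varepsilon_i$. Hence \eqref{eq:tail-integral} shows the contribution outside a small ball around $\bar y$ is $O(M_i^{-2^*})\to0$.

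Near $\bar y$, the lower bound in \eqref{eq:bubble-bounds} shows $\int_{\mathbf B_\delta(\bar y)}u_i^{2^*}\ge c>0$, and $u_i^{2^*}$ concentrates at $\bar y$. Since $\mathbf F_i$ is bounded in $C^{1,1}$, it converges along a subsequence in $C^1$ to some $\mathbf F$. The limit gives $\nabla\mathbf F(\bar y)=0$, so $|\nabla K_i(\bar y)|=o(\varepsilon_i)$. By \eqref{eq:small-gradient-region}, $\bar y\in\overline{\mathcal V}$ eventually, where $K_i=K_{\varepsilon_i}$. Then $\nabla\mathbf F=\nabla\big((K_{\varepsilon}-2)/\varepsilon\big)$, which is independent of $\varepsilon$ near $\bar y$. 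By Lemma~\ref{lem:background-properties}, its zeros are exactly $\{0\}\cup\{|y|=1\}$.

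One thing needs checking here: the Kelvin-transformed case $\bar y=\infty$, which has been mapped to $0$. That case uses the second identity in \eqref{eq:KW} instead and goes through the same way.

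\textbf{Step 2: choose $\rho$ and get the sign.} Choose $\rho$ small so that $\mathtt V_\rho\subset\mathcal V$ and $\mathtt V_\rho$ lies inside the ball where \eqref{eq:bubble-bounds} holds. On $\mathtt V_\rho$ we have $K_i=K_{\varepsilon_i}$, which is radial.

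If $\bar y=0$, then $Y\cdot\nabla K=rK_\varepsilon'(r)$. By \eqref{eq:polar-expansion} this is $\ge c\varepsilon r^m=c\varepsilon|t_N|^m$, choosing $t_N$ as any coordinate, or more simply using $|t|$. Here I would actually use the bound $\ge c\varepsilon|t|^m$, which dominates $c\varepsilon|t_N|^m$.

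If $|\bar y|=1$, compute $Y_{\bar y}\cdot y/|y|=\frac{(1-|y|^2)}{2|y|}\,\bar y\cdot y$. Near $\bar y$ we have $\bar y\cdot y\approx1$, so this factor is $\approx-(r-1)$. Therefore
\[
Y\cdot\nabla K_{\varepsilon}=K_\varepsilon'(r)\,\frac{(1-r^2)\,\bar y\cdot y}{2r}.
\]
By \eqref{eq:equator-expansion}, $K_\varepsilon'(r)\approx-\varepsilon m(m/2)^{m/2}|r-1|^{m-2}(r-1)$. So the product is $\approx\varepsilon m(m/2)^{m/2}|r-1|^m\ge c\varepsilon|t_N|^m$.

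\textbf{Step 3: the integral bound \eqref{eq:kw-local-upper}.} Use the Kazdan--Warner identity with this $Y$: $Y=y$ when $\bar y=0$ (after a Kelvin transform if needed), or $Y=Y_{\bar y}$. This splits as
\[
\int_{\mathtt V_\rho}(Y\cdot\nabla K_i)u_i^{2^*}=-\int_{\R^N\setminus\mathtt V_\rho}(Y\cdot\nabla K_i)u_i^{2^*}.
\]
The left side is $\ge0$ by Step 2.

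For the right side, $|Y|\le C(1+|y|^2)$ and $|\nabla K_i|\le C\varepsilon_i$ on $\mathbf B_3$. For large $|y|$, $|\nabla K_{\varepsilon}|\le C\varepsilon|y|^{-m-1}$, since $K_\varepsilon-(2-\varepsilon)=O(\varepsilon|y|^{-m})$ at infinity by \eqref{eq:kelvin}. Combined with \eqref{eq:global-tail-bound}, the right side is bounded by
\[
C\varepsilon_iM_i^{-2^*}\int(1+|y|^2)^{1-N}(1+|y|)^{-m-1}<\infty .
\]
This gives \eqref{eq:kw-local-upper}.

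\textbf{Main obstacle.} I expect Step 1 to be the main obstacle. Ruling out blow-up at points where $\nabla K_{\varepsilon}\neq0$ requires handling the $C^1$ convergence of the normalized $\mathbf F_i$, whose perturbation $h_i/\varepsilon_i$ only tends to zero in $C^m$ as $\varepsilon_i$. It also requires the concentration of $u_i^{2^*}$, which follows from \eqref{eq:bubble-bounds}.
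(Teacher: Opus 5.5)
Your proposal is correct and follows essentially the same route as the paper. You locate $\bar y$ through constant-field Kazdan--Warner identities divided by $\varepsilon_i$, read off the sign of $Y\cdot\nabla K_{\varepsilon_i}$ from \eqref{eq:polar-expansion} and \eqref{eq:equator-expansion}, and bound the near integral by minus the far integral using \eqref{eq:global-tail-bound} and \eqref{eq:tail-integral}. The one step to tighten is ``the limit gives $\nabla\mathbf F(\bar y)=0$'': the limit is simply $\mathbf F_*=(K_\varepsilon-2)/\varepsilon$ because $\|h_i\|_{C^m}\le\varepsilon_i^2$, and you should argue as the paper does by taking $e=\nabla\mathbf F_*(\bar y)$, so that $e\cdot\nabla\mathbf F_i\ge c>0$ near $\bar y$ and the lower bound $\int_{\mathbf B_\delta(\bar y)}u_i^{2^*}\ge c$ contradicts the $O(M_i^{-2^*})$ tail (this also removes the detour through \eqref{eq:small-gradient-region}, which only covers $\overline{\mathbf B}_2$).
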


\begin{proof}
We first prove that $\bar y=0$ or $|\bar y|=1$.
By \eqref{eq:admissible} and Lemma~\ref{lem:supported-properties},
\[
 \|h_i\|_{C^m}+\|\tilde{h}_i\|_{C^m}\leq C\varepsilon_i^2.
\]
It follows from \eqref{eq:background} that $\mathbf{F}_i\to \mathbf{F}_*$ in
$C^1_{\rm loc}(\R^N)$, where
\[
 \mathbf{F}_*(y)=-\left[1-
       \left(\frac{4|y|^2}{(1+|y|^2)^2}\right)^{m/2}\right]^{m/2}.
\]
By Lemma~\ref{lem:background-properties}, we have
\[
 \{y\in\R^N:\,\nabla \mathbf{F}_*(y)=0\}
       =\{0\}\cup\{|y|=1\}.
\]

For each $Y$ in Lemma~\ref{lem:kw},
\[
 \sup_i\|Y\cdot\nabla \mathbf{F}_i\|_{L^\infty(\R^N)}<\infty.
\]
Indeed, $\mathbf{F}_i(y)=\mathbf{F}_*(y)$ for $|y|\geq2$, and
$|\nabla \mathbf{F}_*(y)|=O(|y|^{-m-1})$ and $Y(y)=O(|y|^2)$ as
$|y|\to\infty$, with $m\geq2$. Dividing \eqref{eq:KW} by $\varepsilon_i$, we
then obtain that
\begin{equation}\label{eq:normalized-kw}
 \int_{\R^N}(Y\cdot\nabla \mathbf{F}_i)u_i^{2^*}\,\mathrm{d}y=0,
\end{equation}
and it follows from \eqref{eq:tail-integral} that
\begin{equation}\label{eq:normalized-kw-tail}
 \left|\int_{\R^N\setminus \mathtt{V}}(Y\cdot\nabla \mathbf{F}_i)u_i^{2^*}\,\mathrm{d}y\right|
       \leq C_{\mathtt{V}} M_i^{-2^*}
\end{equation}
for each fixed neighborhood $\mathtt{V}$ of $\bar y$.

If $\nabla \mathbf{F}_*(\bar y)\ne0$, choose $Y(y)\equiv\nabla
\mathbf{F}_*(\bar y)$. On a fixed neighborhood $\mathtt{V}$ of $\bar y$, we then have
$Y\cdot\nabla \mathbf{F}_i\geq c>0$ for all large $i$. We deduce from
\eqref{eq:bubble-bounds} that $u_i\geq cM_i$ on $\mathbf
B_{M_i^{-2/(N-2)}}(y_i)\subset \mathtt{V}$. Hence
\[
 \int_{\mathtt{V}}(Y\cdot\nabla \mathbf{F}_i)u_i^{2^*}\,\mathrm{d}y
 \geq cM_i^{2^*}M_i^{-2N/(N-2)}=c>0,
\]
contradicting \eqref{eq:normalized-kw} and
\eqref{eq:normalized-kw-tail}. This proves that $\bar y=0$ or $|\bar y|=1$.

We now prove \eqref{eq:kw-local-upper}. Choose small $\rho>0$,
independent of $i$, such that
\[
 \overline{\mathtt{V}_\rho}\subset\mathcal V,\ \ 
 \mathtt{V}_\rho\subset\mathbf B_{1/2}(\bar y),
\]
and \eqref{eq:bubble-bounds} holds on $\mathtt{V}_\rho$ for all large $i$.
By Lemma~\ref{lem:supported-properties},
\[
 h_i=\tilde{h}_i=0,\ \ K_i=K_{\varepsilon_i}
       \,\ \ \text{on}\ \ \mathtt{V}_\rho.
\]

Set $r=|y|$. We distinguish two cases.

\noindent\textit{Case 1: $|\bar y|=1$.} For
$y\in\mathtt{V}_\rho\subset\mathbf B_{1/2}(\bar y)$, we have
\[
 \bar y\cdot y=1+\bar y\cdot(y-\bar y)
       \geq1-|y-\bar y|>\frac12.
\]
By \eqref{eq:equator-expansion} and \eqref{eq:equatorial-field},
\begin{equation}\label{eq:equatorial-sign}
 \begin{aligned}
 Y_{\bar y}\cdot\nabla K_i
 &=\frac{(\bar y\cdot y)(1-r^2)}{2r}K_{\varepsilon_i}'(r)\\
 &=\varepsilon_i m\left(\frac m2\right)^{m/2}(\bar y\cdot y)|r-1|^m
       +O(\varepsilon_i|r-1|^{m+1})\\
 &\geq c\varepsilon_i|r-1|^m,\ \ y\in\mathtt{V}_\rho.
 \end{aligned}
\end{equation}

\noindent\textit{Case 2: $\bar y=0$.} It follows from
\eqref{eq:polar-expansion} that
\begin{equation}\label{eq:polar-sign}
 y\cdot\nabla K_i=rK_{\varepsilon_i}'(r)
       \geq c\varepsilon_i|y|^m\geq c\varepsilon_i|y_N|^m,
       \ \ y\in\mathtt{V}_\rho.
\end{equation}

In both cases, it follows from \eqref{eq:equatorial-sign},
\eqref{eq:polar-sign} and the definition of $t_N$ that
\[
 Y\cdot\nabla K_i\geq c\varepsilon_i|t_N(y)|^m\geq0,
       \ \ y\in\mathtt{V}_\rho.
\]
Applying \eqref{eq:normalized-kw} and \eqref{eq:normalized-kw-tail} with
$\mathtt{V}=\mathtt{V}_\rho$, we obtain that
\[
 \begin{aligned}
 0\leq\int_{\mathtt{V}_\rho}(Y\cdot\nabla K_i)u_i^{2^*}\,\mathrm{d}y=-\varepsilon_i\int_{\R^N\setminus\mathtt{V}_\rho}
       (Y\cdot\nabla \mathbf{F}_i)u_i^{2^*}\,\mathrm{d}y\leq C\varepsilon_i M_i^{-2^*}.
 \end{aligned}
\]
This proves \eqref{eq:kw-local-upper}, which completes the proof.
\end{proof}

Combining Lemma~\ref{lem:kw-local} with the lower bound in
\eqref{eq:bubble-bounds}, we exclude the blow-up sequence in
Proposition~\ref{prop:blowup-sequence} and obtain the following upper bound.

\begin{lemma}\label{lem:uniform-upper}
Let $K$ satisfy \eqref{eq:admissible}.
There exists $C>0$, depending only on $N$ and $\Omega$, such that, for
$\varepsilon>0$ small enough, any solution $u$ of \eqref{eq:euclidean}
satisfies
\begin{equation}\label{eq:uniform-upper}
 u(y)\leq C(1+|y|^2)^{-(N-2)/2}\, \ \text{for any}\ \ y\in\R^N.
\end{equation}
\end{lemma}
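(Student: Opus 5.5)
We argue by contradiction. If the statement fails, there are $\varepsilon_i\to0$, functions $h_i\in C_c^\infty(\Omega)$ with $\|h_i\|_{C^m}\leq\varepsilon_i^2$, and solutions $u_i$ of \eqref{eq:euclidean} with $K_i=K_{\varepsilon_i}+h_i$ such that $\sup_{\R^N}(1+|y|^2)^{(N-2)/2}u_i\to\infty$. (Both the statement and the constant are then understood as: there are $\varepsilon_0>0$ and $C$ working for all $\varepsilon\leq\varepsilon_0$ and all admissible $h$.) Proposition~\ref{prop:blowup-sequence} applies. After a possible Kelvin transform, $(u_i)$ has a unique isolated simple blow-up point $\bar y\in\R^N$, with local maxima $y_i\to\bar y$, $M_i=u_i(y_i)\to\infty$, and the two-sided bounds \eqref{eq:bubble-bounds}. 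Lemma~\ref{lem:kw-local} then gives $\bar y=0$ or $|\bar y|=1$, a neighborhood $\mathtt V_\rho$ on which $Y\cdot\nabla K_i\geq c\varepsilon_i|t_N(y)|^m$, and the upper bound \eqref{eq:kw-local-upper}.

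The contradiction comes from a matching lower bound for $\int_{\mathtt V_\rho}(Y\cdot\nabla K_i)u_i^{2^*}$ using the lower bubble bound $u_i\geq cM_i(1+CM_i^{4/(N-2)}|y-y_i|^2)^{-(N-2)/2}$. Set $\lambda_i=M_i^{2/(N-2)}$. On the region $|y-y_i|\leq\rho/2$ the integrand is at least $c\varepsilon_i|t_N(y)|^mM_i^{2^*}(1+C\lambda_i^2|y-y_i|^2)^{-N}$. The key is to use the \emph{outer} part of the bubble, where $|y-y_i|\sim s$ with $\lambda_i^{-1}\ll s\leq\rho/2$. There $u_i^{2^*}\gtrsim M_i^{-2^*}s^{-2N}$, and on a positive fraction of the annulus $\{s\leq|y-y_i|\leq2s\}$ we have $|t_N(y)|\geq cs$ once $s\geq 2|t_N(y_i)|$. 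For this, pick points at distance $\sim s$ from $y_i$ in the $t_N$-direction; the coordinates $t$ are a fixed diffeomorphism, so this is uniform. The annulus then contributes at least $c\varepsilon_iM_i^{-2^*}s^{m-2N+N}=c\varepsilon_iM_i^{-2^*}s^{m-N}=c\varepsilon_iM_i^{-2^*}s^{-2}$, using $m=N-2$.

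Summing over dyadic $s=2^{-k}\rho$ from $s\approx\max(\lambda_i^{-1},2|t_N(y_i)|)$ up to $\rho/2$ gives a lower bound of order $\varepsilon_iM_i^{-2^*}\max(\lambda_i^{-1},|t_N(y_i)|)^{-2}$. Since the exponent $m-N=-2$ is negative, the sum is dominated by the smallest scale, and this is exactly where criticality $m=N-2$ enters. This grows without bound relative to $C\varepsilon_iM_i^{-2^*}$ as long as $\max(\lambda_i^{-1},|t_N(y_i)|)\to0$. Indeed $\lambda_i\to\infty$, and $t_N(y_i)\to t_N(\bar y)=0$ since $y_i\to\bar y$. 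Comparing with \eqref{eq:kw-local-upper} gives a contradiction for large $i$. The point is that a slightly better power of $s$ is needed than a naive integration would give: if $m>N-2$ the sum would be dominated by the largest scale and give only a bounded multiple, so at the critical order one must use the divergent dyadic sum.

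The step needing most care is the geometric one: showing that each annulus contains a set of measure $\gtrsim s^N$ on which $|t_N|\geq cs$, uniformly in $i$. This holds in the case $\bar y=0$ as well, since there the coordinate is $t_N=y_N$ (the bound \eqref{eq:polar-sign} even gives $|y|^m$, which is easier to use). One must also confirm that the annuli stay inside $\mathtt V_\rho$ and inside the ball where \eqref{eq:bubble-bounds} holds; this is arranged by taking $s\leq\rho/4$ and $i$ large so that $|y_i-\bar y|<\rho/4$. With blow-up excluded, the uniform bound \eqref{eq:uniform-upper} follows. The constant depends only on $N$ and $\Omega$, since all constants in Lemmas~\ref{lem:supported-properties}--\ref{lem:kw-local} are independent of $\varepsilon$ and $h$.
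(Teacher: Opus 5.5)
Your proof is correct and follows the paper's skeleton: contradiction, Proposition~\ref{prop:blowup-sequence}, Lemma~\ref{lem:kw-local}, and then a lower bound for $\int_{\mathtt{V}_\rho}(Y\cdot\nabla K_i)u_i^{2^*}\,\mathrm{d}y$ that beats $C\varepsilon_iM_i^{-2^*}$. The difference is which part of the bubble you use for that lower bound.

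The paper uses only the core $\mathbf B_{M_i^{-2/(N-2)}}(y_i)$, where $u_i\geq cM_i$. It pulls this ball back to a box in $t$-coordinates. It handles the unknown offset $t_N(y_i)$ by noting that $\int_I|s|^m\,\mathrm{d}s$ over an interval of given length is smallest for the centered interval. This gives $M_i^{2(N-m)/(N-2)}\leq C$.

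You use the outer region, where $u_i\geq cM_i^{-1}|y-y_i|^{2-N}$. You handle the offset by starting at scale $\max(M_i^{-2/(N-2)},2|t_N(y_i)|)\to0$. Your sketched geometric step is easy to make rigorous because $|\nabla t_N|=1$ near $\bar y$. Take $\sigma\in\{\pm1\}$ with $\sigma t_N(y_i)\geq0$. Then the ball of radius $s/4$ about $y_i+\tfrac32 s\,\sigma\nabla t_N(y_i)$ lies in $\{s\leq|y-y_i|\leq2s\}$, and $|t_N|\geq s/2$ on it for small $s$. This choice of direction even removes the need for $s\geq2|t_N(y_i)|$.

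The dyadic sum is then superfluous: a single annulus at the smallest scale suffices. At $s=M_i^{-2/(N-2)}$ that annulus is essentially the paper's core, and it gives the same inequality $M_i^{4/(N-2)}\leq C$.

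Your commentary on where criticality enters is wrong, though this does not affect the proof. The annulus contribution scales like $s^{m-N}$, so the smallest scale dominates for every $m<N$. Hence your claim that the largest scale dominates when $m>N-2$ fails throughout $N-2<m<N$. Likewise, the paper's final inequality needs only $m<N$.

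The value $m=N-2$ is used upstream, in Proposition~\ref{prop:blowup-sequence}. There, $(*)_{N-2}$ holds with $L_i=C\varepsilon_i^{1/(m-1)}\to0$, so Lemma~\ref{lem:li-global} yields the unique isolated simple blow-up point. That in turn gives the bounds \eqref{eq:bubble-bounds} on which both arguments rely.
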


\begin{proof}
We argue by contradiction. Suppose that there are
$0<\varepsilon_i\leq1/i$,
$h_i\in C_c^\infty(\Omega)$ and solutions $u_i$ of
\eqref{eq:euclidean} with $K=K_{\varepsilon_i}+h_i$ such that
$\|h_i\|_{C^m}\leq\varepsilon_i^2$ and
\[
 \sup_{y\in\R^N}(1+|y|^2)^{(N-2)/2}u_i(y)\geq i.
\]
By Proposition~\ref{prop:blowup-sequence}, up to a subsequence, $(u_i)$ has a
unique isolated simple blow-up point. If the blow-up point is at $\infty$,
we replace $(u_i,K_i)$ by $(\widehat u_i,\tilde{K}_i)$ and keep the notation
$(u_i,K_i)$. Let $y_i\to\bar y\in\R^N$ be the local maximum points in
Proposition~\ref{prop:blowup-sequence}, with $M_i=u_i(y_i)\to\infty$.
We take $Y$, $t$ and $\mathtt{V}_\rho$ as in Lemma~\ref{lem:kw-local}.
For $\rho>0$ small, the inverse $t^{-1}$ satisfies
\[
 |t^{-1}(z)-t^{-1}(w)|\leq C|z-w|,\ \ 
 |\det D(t^{-1})(z)|\geq c,\ \ z,w\in\mathbf B_\rho,
\]
where $c>0$ and $C\geq1$ are independent of $i$.
Since $t(y_i)\to0$ and $M_i\to\infty$, for all large $i$ we have
\[
 \mathbf B_{\frac1C M_i^{-2/(N-2)}}(t(y_i))
 \subset t(\mathbf B_{M_i^{-2/(N-2)}}(y_i))
 \subset\mathbf B_\rho.
\]
Hence $t(\mathbf B_{M_i^{-2/(N-2)}}(y_i))$ contains a rectangular box
$Q_i=\mathcal I_{i,1}\times\cdots\times\mathcal I_{i,N}$ with
\[
 cM_i^{-2/(N-2)}\leq|\mathcal I_{i,\ell}|\leq CM_i^{-2/(N-2)},
 \ \ 1\leq\ell\leq N.
\]
We have
\[
 \begin{aligned}
 \int_{\mathcal I_{i,N}}|s|^m\,\mathrm{d}s
 \geq\int_{-\frac{|\mathcal I_{i,N}|}{2}}^{\frac{|\mathcal I_{i,N}|}{2}}
       |s|^m\,\mathrm{d}s=\frac{|\mathcal I_{i,N}|^{m+1}}{2^m(m+1)}
 \geq cM_i^{-2(m+1)/(N-2)}.
 \end{aligned}
\]
Making the change of variables $z=t(y)$, we then obtain that
\[
 \begin{aligned}
 \int_{t^{-1}(Q_i)}|t_N(y)|^m\,\mathrm{d}y
 &=\int_{Q_i}|z_N|^m|\det D(t^{-1})(z)|\,\mathrm{d}z\\
 &\geq c\left(\prod_{\ell=1}^{N-1}|\mathcal I_{i,\ell}|\right)
             \int_{\mathcal I_{i,N}}|s|^m\,\mathrm{d}s\\
 &\geq cM_i^{-2(N+m)/(N-2)}.
 \end{aligned}
\]
Since $t^{-1}(Q_i)\subset\mathbf B_{M_i^{-2/(N-2)}}(y_i)$ and
$u_i\geq cM_i$ on $\mathbf B_{M_i^{-2/(N-2)}}(y_i)$, we deduce from
Lemma~\ref{lem:kw-local} that
\[
 \int_{\mathtt{V}_\rho}(Y\cdot\nabla K_i)u_i^{2^*}\,\mathrm{d}y
 \geq c\varepsilon_i M_i^{2^*-2(N+m)/(N-2)}
 =c\varepsilon_i M_i^{-2m/(N-2)}.
\]
By \eqref{eq:kw-local-upper}, we derive that
\[
 M_i^{2(N-m)/(N-2)}\leq C.
\]
Since $m=N-2<N$ and $M_i\to\infty$ as $i\to\infty$,
we obtain a contradiction.
This completes the proof.
\end{proof}

We now use Lemma~\ref{lem:uniform-upper} to complete the a-priori estimates
for $u$ and $\widehat u$. The following proposition also gives compactness
for solutions with $h$ in a fixed finite-dimensional subspace of
$C_c^\infty(\Omega)$.

\begin{proposition}\label{prop:compactness}
Fix $0<\alpha<1$. Suppose that $\varepsilon>0$ is small enough
and $K$ satisfies \eqref{eq:admissible}.
Then there exist constants $c,C>0$, depending only on $N$, $\Omega$
and $\alpha$, such that any solution $u$ of \eqref{eq:euclidean} satisfies
\begin{equation}\label{eq:compact-bounds}
 c(1+|y|^2)^{-(N-2)/2}\leq u(y)
 \leq C(1+|y|^2)^{-(N-2)/2},\ \ y\in\R^N,
\end{equation}
and
\[
 \|u\|_{C^{3,\alpha}(\overline{\mathbf B}_2)}
 +\|\widehat u\|_{C^{3,\alpha}(\overline{\mathbf B}_2)}\leq C.
\]

Moreover, suppose $h_i\to h$ as
$i\to\infty$ in a fixed finite-dimensional subspace of $C_c^\infty(\Omega)$
and that $K_{\varepsilon}+h_i$ satisfies \eqref{eq:admissible} for all $i$.
If $u_i$ solves \eqref{eq:euclidean} with coefficient $K_{\varepsilon}+h_i$
for each $i$, then, up to a subsequence,
\[
 u_i\to u,\ \ \widehat u_i\to\widehat u
 \,\ \ \text{in}\ \ C^{2,\alpha}(\overline{\mathbf B}_2)
 \ \ \text{as}\ \ i\to\infty,
\]
where $u$ solves \eqref{eq:euclidean} with coefficient $K_{\varepsilon}+h$.
\end{proposition}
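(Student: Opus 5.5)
The plan is to obtain the upper bound in \eqref{eq:compact-bounds} directly from Lemma~\ref{lem:uniform-upper}, then derive the lower bound, the $C^{3,\alpha}$ bounds, and finally the compactness statement.

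\emph{Lower bound.} By \eqref{eq:uniform-upper} and the Kelvin transform \eqref{eq:kelvin-solution}, both $u$ and $\widehat u$ are bounded by $C$ on $\overline{\mathbf B}_3$. Hence the potentials $Ku^{2^*-2}$ and $\tilde K\widehat u^{2^*-2}$ are uniformly bounded there. The crucial step is a uniform positive lower bound at one point, say $u(0)$, and I would argue it by contradiction. Suppose $\sup_{\mathbf B_1}u$ is small. Harnack's inequality on a chain of balls covering $\overline{\mathbf B}_2$, together with the same argument for $\widehat u$ on $\mathbf B_1$, shows that $u$ is small on $\overline{\mathbf B}_2$ and $\widehat u$ is small on $\overline{\mathbf B}_1$. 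Combined with the decay, this gives $\|(1+|y|^2)^{(N-2)/2}u\|_{L^\infty}$ small. Then $\int u^{2^*}\le \delta^{2^*-2}\int u^2(1+|y|^2)^{-2}\cdot C$, while $\int|\nabla u|^2=\int Ku^{2^*}$. The Sobolev inequality gives $\int Ku^{2^*}\ge c\,(\int u^{2^*})^{(N-2)/N}\cdot$, and so $\|u\|_{2^*}\ge c$ uniformly, since $K\le 5/2$. That is incompatible with $u\le\delta(1+|y|^2)^{-(N-2)/2}$ once $\delta$ is small, because then $\int u^{2^*}\le C\delta^{2^*}$. This yields $u(0)\ge c$. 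Harnack's inequality then propagates the bound to $u\ge c$ on $\overline{\mathbf B}_2$ and $\widehat u\ge c$ on $\overline{\mathbf B}_1$, and the Kelvin transform converts the latter into $u\ge c|y|^{2-N}$ for $|y|\ge1$, which is \eqref{eq:compact-bounds}.

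\emph{Regularity bounds.} With $u,\widehat u\le C$ on $\mathbf B_3$ and $\|K\|_{C^{m-1,1}(\overline{\mathbf B}_3)}+\|\tilde K\|_{C^{m-1,1}(\overline{\mathbf B}_3)}\le C$ from Lemma~\ref{lem:supported-properties} (note $m-1\ge1$, so $K\in C^{1,\alpha}$ uniformly), interior Schauder bootstrapping from $\mathbf B_3$ to $\mathbf B_2$ proceeds as follows. First $u\in W^{2,p}$, then $u\in C^{1,\alpha}$, then $Ku^{2^*-1}\in C^{1,\alpha}$, and finally $u\in C^{3,\alpha}(\overline{\mathbf B}_2)$ with uniform constants. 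The same holds for $\widehat u$. The function $t\mapsto t^{2^*-1}$ is smooth on $[c,C]$, so the positive lower bound handles the possibly non-integer exponent.

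\emph{Compactness.} Given $h_i\to h$ in a finite-dimensional subspace, all norms are equivalent, so $K_i\to K$ in $C^m$. By Arzel\`a--Ascoli and the uniform $C^{3,\alpha}$ bounds, a subsequence gives $u_i\to u$ and $\widehat u_i\to\widehat u$ in $C^{2,\alpha}(\overline{\mathbf B}_2)$. Applying the same argument on larger balls, via the Kelvin transform of the annuli, gives local convergence on $\R^N$ with a limit $u$ solving the equation. The bounds \eqref{eq:compact-bounds} pass to the limit, giving $u>0$, $u=O(|y|^{2-N})$ and $\nabla u=O(|y|^{1-N})$ through $\widehat u$, so $u\in D^{1,2}$. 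The limit of the $\widehat u_i$ is the Kelvin transform of $u$.

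The main obstacle I expect is the uniform lower bound, specifically making the Harnack chain reach both $\mathbf B_2$ and the neighbourhood of infinity through $\widehat u$ with constants independent of $\varepsilon$ and $h$. The remaining steps are standard.
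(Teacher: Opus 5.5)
Your proposal is correct and follows essentially the same route as the paper. The paper takes the upper bound from Lemma~\ref{lem:uniform-upper}, gets a uniform lower bound by combining the Sobolev/energy bound $\|u\|_{L^{2^*}(\R^N)}\geq c$ with Harnack's inequality for $u$ and $\widehat u$ (packaged as the weighted inequality \eqref{eq:weighted-harnack} rather than your contradiction/Harnack-chain phrasing), and then uses Schauder estimates plus Arzel\`a--Ascoli and the Kelvin transform for compactness; your explicit check that the limit lies in $D^{1,2}(\R^N)$ is a detail the paper leaves implicit.
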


\begin{proof}
The upper bound in \eqref{eq:compact-bounds} follows from
Lemma~\ref{lem:uniform-upper}. Multiplying \eqref{eq:euclidean} by $u$ and
integrating over $\R^N$, we obtain that
\begin{equation}\label{eq:energy-lower-bound}
 \int_{\R^N}|\nabla u|^2\,\mathrm{d}y
 =\int_{\R^N}Ku^{2^*}\,\mathrm{d}y
 \leq C\|u\|_{L^{2^*}(\R^N)}^{2^*},\ \ 
 \|u\|_{L^{2^*}(\R^N)}\geq c>0.
\end{equation}
By Lemma~\ref{lem:supported-properties}, \eqref{eq:kelvin-solution} and
\eqref{eq:uniform-upper}, $Ku^{2^*-2}$ and $\tilde{K}\widehat u^{2^*-2}$ are
uniformly bounded on $\overline{\mathbf B}_3$. Applying Harnack's inequality
to $u$ and $\widehat u$ in $\mathbf B_2$, we obtain that
\begin{equation}\label{eq:weighted-harnack}
 \sup_{y\in\R^N}(1+|y|^2)^{(N-2)/2}u(y)
 \leq C\inf_{y\in\R^N}(1+|y|^2)^{(N-2)/2}u(y).
\end{equation}
It follows from \eqref{eq:energy-lower-bound} and \eqref{eq:weighted-harnack} that the lower bound in
\eqref{eq:compact-bounds} holds.

Applying elliptic estimates to \eqref{eq:euclidean} and
$-\Delta\widehat u=\tilde{K}\widehat u^{2^*-1}$, we then deduce from
Lemma~\ref{lem:supported-properties} and \eqref{eq:compact-bounds} that
\[
 \|u\|_{C^{3,\alpha}(\overline{\mathbf B}_2)}
 +\|\widehat u\|_{C^{3,\alpha}(\overline{\mathbf B}_2)}\leq C.
\]
Thus, when $h_i\to h$ in the fixed finite-dimensional subspace of
$C_c^\infty(\Omega)$, up to a subsequence, we have
\[
 \|u_i-u\|_{C^{2,\alpha}(\overline{\mathbf B}_2)}
 +\|\widehat u_i-\widehat u\|_{C^{2,\alpha}(\overline{\mathbf B}_2)}\to0
 \ \ \text{as}\ \ i\to\infty.
\]
By \eqref{eq:kelvin-solution}, we also have $u_i\to u$ in
$C^2_{\rm loc}(\R^N)$. Since $h_i\to h$ in $C^m(\R^N)$, we pass to the limit in the equations
$-\Delta u_i=(K_{\varepsilon}+h_i)u_i^{2^*-1}$ to obtain that
\[
 -\Delta u=(K_{\varepsilon}+h)u^{2^*-1}\,\ \ \text{in}\ \ \R^N.
\]
Moreover, it follows from \eqref{eq:compact-bounds} that
$u(y)\geq c(1+|y|^2)^{-(N-2)/2}>0$ for $y\in\R^N$.
This completes the proof.
\end{proof}

\section{Proof of Theorem~\ref{thm:construction}}

The purpose of this section is to complete the proof of
Theorem~\ref{thm:construction}.
Throughout this section, we fix $\varepsilon>0$ small enough and consider $h\in C_c^\infty(\Omega)$ with
$\|h\|_{C^m}\leq\varepsilon^2$, as in
Lemma~\ref{lem:supported-properties} and
Proposition~\ref{prop:compactness}.
We first prove that the linearized operator of
\eqref{eq:euclidean} around a solution $u$ is Fredholm of index zero
and characterize its kernel and range.
We then choose a finite-dimensional subspace of $C_c^\infty(\Omega)$
and apply Sard's theorem to the projection $(h,u)\mapsto h$
on the corresponding solution manifold to obtain
an $h$ for which all solutions of \eqref{eq:euclidean} with
$K=K_{\varepsilon}+h$ are non-degenerate in $D^{1,2}(\R^N)$.
Using the implicit function theorem, we thus deduce from
Proposition~\ref{prop:compactness} that the solution set of
\eqref{eq:euclidean} is finite.

Fix $0<\alpha<1$ and set $p=2^*-1$. Define for any $y\ne0$,
\begin{equation}\label{eq:space-transforms}
 \widehat\phi(y):=|y|^{2-N}\phi\left(\frac{y}{|y|^2}\right),
 \ \ (\mathcal T f)(y):=|y|^{-N-2}f\left(\frac{y}{|y|^2}\right).
\end{equation}
We also define
\begin{equation}\label{eq:function-spaces}
 \begin{aligned}
 \mathcal X
 &:=\left\{\phi\in C^{2,\alpha}_{\rm loc}(\R^N):\,
       \widehat\phi\ \ \text{has a}\ \ C^{2,\alpha}
       \ \ \text{extension across}\ \ 0\right\},\\
 \mathcal Y
 &:=\left\{f\in C^{0,\alpha}_{\rm loc}(\R^N):\,
       \mathcal T f\ \ \text{has a}\ \ C^{0,\alpha}
       \ \ \text{extension across}\ \ 0\right\},
 \end{aligned}
\end{equation}
with the norms
\begin{align}
 \|\phi\|_{\mathcal X}
 &:=\|\phi\|_{C^{2,\alpha}(\overline{\mathbf{B}}_2)}
   +\|\widehat\phi\|_{C^{2,\alpha}(\overline{\mathbf{B}}_2)},
       \label{eq:chart-norm}\\
 \|f\|_{\mathcal Y}
 &:=\|f\|_{C^{0,\alpha}(\overline{\mathbf{B}}_2)}
   +\|\mathcal T f\|_{C^{0,\alpha}(\overline{\mathbf{B}}_2)},
       \label{eq:forcing-space}
\end{align}
respectively.
The pairs $(\phi,\widehat\phi)$ and $(f,\mathcal T f)$ form
closed subspaces of
$C^{2,\alpha}(\overline{\mathbf{B}}_2)^2$ and
$C^{0,\alpha}(\overline{\mathbf{B}}_2)^2$, respectively,
since the limit of any sequence of such pairs that converges in these spaces
also satisfies the identities \eqref{eq:space-transforms}
on $1/2\leq|y|\leq2$.
Thus $\mathcal X$ and $\mathcal Y$ are Banach spaces.
Define
\[
 \mathcal X_+
 :=\left\{u\in\mathcal X:\,
       u>0,\ \ \widehat u>0
       \,\ \ \text{on}\ \ \overline{\mathbf{B}}_2\right\}.
\]
For any $u\in\mathcal X_+$, both $u$ and $\widehat u$
are bounded below by a positive constant on
$\overline{\mathbf{B}}_2$.
It then follows from \eqref{eq:chart-norm} that
$\mathcal X_+$ is open in $\mathcal X$.
We also deduce from \eqref{eq:solution-regularity} that
all solutions of \eqref{eq:euclidean} belong to $\mathcal X_+$.

\begin{lemma}\label{lem:function-spaces}
Suppose that $\mathcal X$ and $\mathcal Y$ are as in
\eqref{eq:function-spaces}, with the norms \eqref{eq:chart-norm}
and \eqref{eq:forcing-space}, respectively.
Then the embeddings
$\mathcal X\hookrightarrow D^{1,2}(\R^N)$ and
$\mathcal Y\hookrightarrow L^1(\R^N)\cap L^{2N/(N+2)}(\R^N)$
are continuous.
Moreover, the operator $-\Delta:\mathcal X\to\mathcal Y$
is an isomorphism, and its inverse
$G=(-\Delta)^{-1}:\mathcal Y\to\mathcal X$
is given by
\begin{equation}\label{eq:newton-potential}
 Gf(y)=\frac1{N(N-2)|\mathbf{B}_1|}
       \int_{\R^N}|y-z|^{2-N}f(z)\,\mathrm{d}z,
       \ \ f\in\mathcal Y,
\end{equation}
where $|\mathbf{B}_1|$ denotes the volume of the unit ball.
\end{lemma}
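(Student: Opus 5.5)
The plan is to treat $\mathcal X$ and $\mathcal Y$ as weighted Hölder spaces and to work chart by chart. We use the two charts $\overline{\mathbf B}_2$ and the Kelvin picture, covering a neighborhood of $\infty$. The key identity is the conformal covariance of the Laplacian under the Kelvin transform:
\[
 \Delta\widehat\phi=\mathcal T(\Delta\phi),\qquad y\ne0 .
\]
Consequently $\phi\in\mathcal X$ implies $-\Delta\phi\in C^{0,\alpha}_{\rm loc}$, and $\mathcal T(-\Delta\phi)=-\Delta\widehat\phi$ extends as a $C^{0,\alpha}$ function across $0$. Hence $-\Delta:\mathcal X\to\mathcal Y$ is bounded, with $\|\Delta\phi\|_{\mathcal Y}\le C\|\phi\|_{\mathcal X}$.

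\textbf{Step 1: the embeddings.} We obtain these from decay. For $\phi\in\mathcal X$ and $|y|\ge 1/2$, write $\phi(y)=|y|^{2-N}\widehat\phi(y/|y|^2)$. This gives
\[
 |\phi|\le C\|\phi\|_{\mathcal X}|y|^{2-N},\qquad |\nabla\phi|\le C\|\phi\|_{\mathcal X}|y|^{1-N}.
\]
Therefore $\nabla\phi\in L^2(\R^N)$, since $2(1-N)<-N$, and $\phi\in L^{2^*}$. To place $\phi$ in $D^{1,2}$ we approximate it by $\phi\chi(\cdot/R)$ with a cutoff $\chi$; the error in $\nabla$ is $O(R^{(2-N)/2})$ in $L^2$. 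Likewise $f(y)=|y|^{-N-2}(\mathcal Tf)(y/|y|^2)$ gives $|f|\le C\|f\|_{\mathcal Y}(1+|y|)^{-N-2}$, which is integrable and lies in $L^{2N/(N+2)}$.

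\textbf{Step 2: injectivity.} If $\phi\in\mathcal X$ and $\Delta\phi=0$, then $\phi$ is harmonic on $\R^N$ and tends to $0$ at infinity. By Liouville, or the maximum principle on large balls, $\phi\equiv0$.

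\textbf{Step 3: surjectivity via the Newton potential.} Given $f\in\mathcal Y$, define $Gf$ by \eqref{eq:newton-potential}; the integral converges by the decay from Step 1. Since $f\in C^{0,\alpha}_{\rm loc}$, standard potential theory gives $Gf\in C^{2,\alpha}_{\rm loc}$ and $-\Delta Gf=f$, with the local estimate
\[
 \|Gf\|_{C^{2,\alpha}(\overline{\mathbf B}_2)}\le C\bigl(\|f\|_{C^{0,\alpha}(\overline{\mathbf B}_3)}+\|Gf\|_{L^\infty(\mathbf B_3)}\bigr),
\]
and $\|Gf\|_{L^\infty}\le C\|f\|_{\mathcal Y}$ by the decay of $f$. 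The main point is the behavior at infinity. We use the identity
\[
 |y|^{2-N}\Bigl|\frac{y}{|y|^2}-\frac{z}{|z|^2}\Bigr|^{2-N}=|z|^{N-2}|y-z|^{2-N}.
\]
Substituting $z\mapsto z/|z|^2$ in the integral then yields the exact formula
\[
 \widehat{Gf}=G(\mathcal Tf)\quad\text{on }\R^N\setminus\{0\}.
\]
The right-hand side is $G$ applied to the function $\mathcal Tf$, which is $C^{0,\alpha}$ near $0$ and decays like $|y|^{-N-2}$, since $\mathcal T\mathcal T=\mathrm{id}$. Hence $G(\mathcal Tf)$ is a $C^{2,\alpha}$ function across $0$, and the same local estimate bounds $\|\widehat{Gf}\|_{C^{2,\alpha}(\overline{\mathbf B}_2)}$.

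This shows $Gf\in\mathcal X$ and $\|Gf\|_{\mathcal X}\le C\|f\|_{\mathcal Y}$. So $G$ is a bounded right inverse, and together with injectivity $-\Delta$ is an isomorphism with inverse $G$. Boundedness of the inverse also follows from the open mapping theorem.

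I expect the main obstacle to be the change-of-variables verification of $\widehat{Gf}=G(\mathcal Tf)$, including the Jacobian $|z|^{-2N}$ and the matching of powers. It is routine, but it must be checked carefully.
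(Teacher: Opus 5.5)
Your proposal is correct and follows the paper's proof essentially step by step: Kelvin-chart decay for the embeddings, the covariance $\mathcal T(-\Delta\phi)=-\Delta\widehat\phi$ for boundedness, Liouville for injectivity, and the Newton potential with the exact identity $\widehat{Gf}=G(\mathcal T f)$ plus local Schauder estimates for surjectivity. The one detail you leave implicit, which the paper states, is that your Schauder step needs $\|f\|_{C^{0,\alpha}(\overline{\mathbf B}_3)}+\|\mathcal T f\|_{C^{0,\alpha}(\overline{\mathbf B}_3)}\le C\|f\|_{\mathcal Y}$; this holds because for $2\le|y|\le3$ the point $y/|y|^2$ lies in $\overline{\mathbf B}_{1/2}$, and the maps $y\mapsto|y|^{-N-2}$ and $y\mapsto y/|y|^2$ are smooth there.
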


\begin{proof}
It follows from \eqref{eq:space-transforms} and \eqref{eq:chart-norm} that
for $\phi\in\mathcal X$,
\[
 |\phi(y)|\leq C\|\phi\|_{\mathcal X}(1+|y|)^{2-N},
 \ \ |\nabla\phi(y)|\leq C\|\phi\|_{\mathcal X}(1+|y|)^{1-N}.
\]
Multiplying $\phi$ by cut-off functions supported in large balls
and taking the convolution of the resulting functions with a smooth mollifier,
we then obtain that
$\mathcal X\hookrightarrow D^{1,2}(\R^N)$
with $\|\nabla\phi\|_{L^2}\leq C\|\phi\|_{\mathcal X}$.
We also deduce from \eqref{eq:space-transforms} and
\eqref{eq:forcing-space} that, for $f\in\mathcal Y$,
\[
 |f(y)|\leq C\|f\|_{\mathcal Y}(1+|y|)^{-N-2},
\]
and hence
\[
 \|f\|_{L^1(\R^N)}+\|f\|_{L^{2N/(N+2)}(\R^N)}
 \leq C\|f\|_{\mathcal Y}.
\]
This proves the continuity of both embeddings in
Lemma~\ref{lem:function-spaces}.

It follows from \eqref{eq:space-transforms} that
$\mathcal T(-\Delta\phi)=-\Delta\widehat\phi$ for any $\phi\in\mathcal X$.
Combining this identity with \eqref{eq:chart-norm} and
\eqref{eq:forcing-space}, we then obtain that
$-\Delta:\mathcal X\to\mathcal Y$ is bounded.
Note from \eqref{eq:space-transforms} and \eqref{eq:forcing-space} that
$\mathcal T^2f=f$, $\mathcal T f\in\mathcal Y$ and
$\|\mathcal T f\|_{\mathcal Y}=\|f\|_{\mathcal Y}$.
We next prove that $Gf\in\mathcal X$ and
\begin{equation}\label{eq:newton-bounds}
 \begin{gathered}
 |Gf(y)|+(1+|y|)|\nabla Gf(y)|
 \leq C\|f\|_{\mathcal Y}(1+|y|)^{2-N},\\
 \widehat{Gf}=G(\mathcal T f),
 \ \ \|Gf\|_{\mathcal X}\leq C\|f\|_{\mathcal Y}.
 \end{gathered}
\end{equation}
Set $r=|y|$. If $r\geq2$, split the integral in
\eqref{eq:newton-potential} into $\mathbf B_{r/2}(y)$ and
$\R^N\setminus\mathbf B_{r/2}(y)$. On $\mathbf B_{r/2}(y)$,
we have $|z|\geq r/2$ and hence
$|f(z)|\leq C\|f\|_{\mathcal Y}r^{-N-2}$.
On $\R^N\setminus\mathbf B_{r/2}(y)$, $|y-z|\geq r/2$. Thus, for $j=0,1$,
\[
 \begin{aligned}
 \int_{\R^N}|y-z|^{2-N-j}|f(z)|\,\mathrm{d}z
 &\leq C\|f\|_{\mathcal Y}r^{-N-2}
       \int_0^{r/2}\rho^{1-j}\,\mathrm{d}\rho
       +Cr^{2-N-j}\|f\|_{L^1(\R^N)}\\
 &\leq C\|f\|_{\mathcal Y}r^{2-N-j}.
 \end{aligned}
\]
If $r\leq2$, we split the integral into $\mathbf B_1(y)$ and
$\R^N\setminus\mathbf B_1(y)$. For $j=0,1$, we obtain that
\[
 \int_{\R^N}|y-z|^{2-N-j}|f(z)|\,\mathrm{d}z
 \leq C\|f\|_{L^\infty(\R^N)}+\|f\|_{L^1(\R^N)}
 \leq C\|f\|_{\mathcal Y}.
\]
Since $|\nabla_y|y-z|^{2-N}|=(N-2)|y-z|^{1-N}$,
we deduce from \eqref{eq:newton-potential} that
$|Gf(y)|\leq C\|f\|_{\mathcal Y}(1+|y|)^{2-N}$ and
$|\nabla Gf(y)|\leq C\|f\|_{\mathcal Y}(1+|y|)^{1-N}$.
This proves the first inequality in \eqref{eq:newton-bounds}.
Substituting $z=t/|t|^2$ in \eqref{eq:newton-potential},
we obtain that $\widehat{Gf}=G(\mathcal T f)$ for $y\ne0$.
If $1\leq|y|\leq3$, then
$y/|y|^2\in\overline{\mathbf B}_1$.
All derivatives of $y\mapsto y/|y|^2$ and $y\mapsto|y|^{-N-2}$
are bounded on $\{y:\,1\leq|y|\leq3\}$.
We therefore deduce from \eqref{eq:space-transforms}
and \eqref{eq:forcing-space} that
\[
 \|f\|_{C^{0,\alpha}(\overline{\mathbf{B}}_3)}
 +\|\mathcal T f\|_{C^{0,\alpha}(\overline{\mathbf{B}}_3)}
 \leq C\|f\|_{\mathcal Y}.
\]
The Newtonian potential in \eqref{eq:newton-potential} satisfies $-\Delta Gf=f$
in the sense of distributions in $\R^N$.
Applying the Schauder estimates on $\mathbf{B}_3$ to $Gf$ and $G(\mathcal T f)$,
we then obtain bounds for their $C^{2,\alpha}(\overline{\mathbf{B}}_2)$ norms.
Since $-\Delta G(\mathcal T f)=\mathcal T f$ and $\mathcal T f\in C^{0,\alpha}$
near $0$, we use $\widehat{Gf}=G(\mathcal T f)$ to extend $\widehat{Gf}$ across $0$.
Thus $Gf\in\mathcal X$ and $\|Gf\|_{\mathcal X}\leq C\|f\|_{\mathcal Y}$,
which proves \eqref{eq:newton-bounds}.
If $\phi\in\mathcal X$ satisfies $-\Delta\phi=0$ in
$\R^N$, then $\phi(y)\to0$ as $|y|\to\infty$.
Liouville's theorem therefore implies that $\phi=0$.
Hence $-\Delta:\mathcal X\to\mathcal Y$ is an isomorphism with inverse $G$,
which completes the proof of Lemma~\ref{lem:function-spaces}.

\end{proof}

We next apply Lemma~\ref{lem:function-spaces} to prove that the linearized
operator of \eqref{eq:euclidean} is Fredholm of index zero and to characterize
its kernel and range.

\begin{lemma}\label{lem:linearization}
Suppose $K\in C^{0,\alpha}_{\rm loc}(\R^N)$ is positive
and $\tilde{K}$ has a $C^{0,\alpha}$ extension across $0$
with $\tilde{K}(0)>0$.
Let $u$ be a solution of \eqref{eq:euclidean}.
Then the operator
\[
 A_u:=-\Delta-pKu^{p-1}:\mathcal X\to\mathcal Y
\]
is Fredholm of index zero.
Moreover, $\ker A_u$ consists exactly of the weak solutions
$\phi\in D^{1,2}(\R^N)$ of
$-\Delta\phi=pKu^{p-1}\phi$ in $\R^N$, and
\begin{equation}\label{eq:range-criterion}
 \operatorname{ran}A_u
 =\left\{f\in\mathcal Y:\,
       \int_{\R^N}f\phi\,\mathrm{d}y=0
       \, \ \text{for any}\ \ \phi\in\ker A_u\right\}.
\end{equation}
\end{lemma}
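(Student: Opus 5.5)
Factor the operator as $A_u=-\Delta(I-\mathcal K)$ with $\mathcal K:=G\circ M$, where $M\phi:=pKu^{p-1}\phi$ and $G=(-\Delta)^{-1}:\mathcal Y\to\mathcal X$ is the isomorphism of Lemma~\ref{lem:function-spaces}. Since $-\Delta:\mathcal X\to\mathcal Y$ is an isomorphism, it suffices to show that $\mathcal K:\mathcal X\to\mathcal X$ is compact. Riesz--Schauder theory then gives that $I-\mathcal K$ is Fredholm of index zero, and hence so is $A_u$.

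\textbf{Boundedness of $M$.} First I check that $M:\mathcal X\to\mathcal Y$ is bounded. A direct computation from \eqref{eq:space-transforms} gives
\[
 \mathcal T(Ku^{p-1}\phi)=\tilde K\,\widehat u^{\,p-1}\,\widehat\phi,
\]
because $|y|^{-N-2}=|y|^{-4}|y|^{2-N}$ and $u^{p-1}(y/|y|^2)=|y|^{-4}\widehat u^{\,p-1}(y)$. By Lemma~\ref{lem:regularity}, $u$ and $\widehat u$ are positive and $C^{2,\alpha}$ near $\overline{\mathbf B}_2$, and $K,\tilde K$ are $C^{0,\alpha}$ there. Hence
\[
 \|M\phi\|_{\mathcal Y}\le C\bigl(\|\phi\|_{C^{0,\alpha}(\overline{\mathbf B}_2)}+\|\widehat\phi\|_{C^{0,\alpha}(\overline{\mathbf B}_2)}\bigr).
\]
The right-hand side only involves $C^{0,\alpha}$ norms, whereas $\mathcal X$ controls $C^{2,\alpha}$ norms. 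By Arzel\`a--Ascoli, $C^{2,\alpha}(\overline{\mathbf B}_2)\hookrightarrow C^{0,\alpha}(\overline{\mathbf B}_2)$ is compact. The pair $(\phi,\widehat\phi)$ covers $\R^N\cup\{\infty\}$ through the two charts, so $M:\mathcal X\to\mathcal Y$ is compact, and therefore $\mathcal K=GM$ is compact.

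\textbf{Bookkeeping step.} The one point needing care is that a bounded sequence $\phi_k$ in $\mathcal X$ must have a subsequence converging in the $C^{0,\alpha}$ sense in both charts simultaneously. Compatibility on the overlap $1/2\le|y|\le2$ passes to the limit, as noted for closedness of $\mathcal X$. I expect this bookkeeping to be the main, though mild, obstacle.

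\textbf{Kernel.} If $\phi\in\ker A_u$, then $\phi\in D^{1,2}$ by Lemma~\ref{lem:function-spaces} and it is a classical solution, hence weak. Conversely, let $\phi\in D^{1,2}$ be a weak solution. Since $Ku^{p-1}\le C(1+|y|)^{-4}$, a Brezis--Kato bootstrap as in Lemma~\ref{lem:regularity} applies to $\phi$, and to $\widehat\phi$, which solves $-\Delta\widehat\phi=p\tilde K\widehat u^{\,p-1}\widehat\phi$ with $\widehat\phi\in D^{1,2}$ by conformal invariance. This gives local boundedness and then $C^{2,\alpha}$ regularity of $\phi$ near every point and of $\widehat\phi$ near $0$, so $\phi\in\mathcal X$. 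For the removable-singularity issue at $0$ for $\widehat\phi$: it lies in $D^{1,2}$ and solves the equation weakly across $0$, since the Kelvin transform preserves the weak formulation in $D^{1,2}$. Hence elliptic regularity applies directly.

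\textbf{Range.} Denote by $Z$ the annihilator set on the right of \eqref{eq:range-criterion}. The integrals $\int f\phi$ converge because $f\in L^{2N/(N+2)}$ and $\phi\in L^{2^*}$. For the inclusion $\operatorname{ran}A_u\subset Z$: if $f=A_u\psi$, then
\[
 \int f\phi=\int(\nabla\psi\cdot\nabla\phi-pKu^{p-1}\psi\phi)=0
\]
by symmetry, using the decay of $\psi$ and $\phi$ to justify the integration by parts. For equality, both sides are closed subspaces of $\mathcal Y$. The range has codimension $\dim\ker A_u=:d$ by index zero. The subspace $Z$ has codimension exactly $d$, because the functionals $f\mapsto\int f\phi_j$ are linearly independent on $\mathcal Y$: test them with $f=Ku^{p-1}\phi_k\in\mathcal Y$, and the Gram matrix $\int Ku^{p-1}\phi_j\phi_k$ is positive definite. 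Since $\operatorname{ran}A_u\subset Z$ and both have codimension $d$, they are equal.
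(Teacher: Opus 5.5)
Your proposal is correct and follows the paper's proof essentially step for step. The shared steps are:
\begin{itemize}
\item the factorization $A_u=(-\Delta)(I-GM)$, with $M$ compact via $C^{2,\alpha}(\overline{\mathbf B}_2)\hookrightarrow C^{0,\alpha}(\overline{\mathbf B}_2)$ in both charts;
\item the regularity of $D^{1,2}$ kernel elements, obtained through $\phi$ and $\widehat\phi$;
\item the codimension count, using $\operatorname{ran}A_u\subset Z$ and that both have codimension $d$.
\end{itemize}
Two of your steps differ slightly from the paper's, and both are valid. You use the $D^{1,2}$-isometry of the Kelvin transform where the paper uses a cut-off argument near $0$. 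You use the positive definite Gram matrix $\int Ku^{p-1}\phi_j\phi_k$ where the paper uses $C_c^\infty(\R^N)\subset\mathcal Y$.

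One small slip: $u^{p-1}(y/|y|^2)=|y|^{4}\,\widehat u^{\,p-1}(y)$, not $|y|^{-4}\,\widehat u^{\,p-1}(y)$. Your resulting identity $\mathcal T(Ku^{p-1}\phi)=\tilde K\,\widehat u^{\,p-1}\widehat\phi$ is nevertheless correct.
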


\begin{proof}
Set
\[
 V=pKu^{p-1},
 \ \ \tilde{V}(y)=|y|^{-4}V\left(\frac{y}{|y|^2}\right)
       =p\tilde{K}(y)\widehat u(y)^{p-1}.
\]
Applying the regularity argument in Lemma~\ref{lem:regularity} under the
present assumptions on $K$ and $\tilde{K}$, we then obtain that
$u,\widehat u\in C^{2,\alpha}_{\rm loc}(\R^N)$ and
$u,\widehat u>0$ in $\R^N$.
Thus $V,\tilde{V}\in C^{0,\alpha}_{\rm loc}(\R^N)$, $V(y)=O((1+|y|)^{-4})$ as
$|y|\to\infty$ and $V\in L^{N/2}(\R^N)$.
Define $M_V\phi:=V\phi$ for $\phi\in\mathcal X$.
We deduce from \eqref{eq:space-transforms} that
$\mathcal T(M_V\phi)=\tilde{V}\widehat\phi$, and hence
$M_V\phi\in\mathcal Y$.
Applying the compact embedding
$C^{2,\alpha}(\overline{\mathbf{B}}_2)\hookrightarrow
C^{0,\alpha}(\overline{\mathbf{B}}_2)$ to $\phi$ and $\widehat\phi$,
we then obtain that $M_V:\mathcal X\to\mathcal Y$ is compact.
It follows from Lemma~\ref{lem:function-spaces} that the operator
\begin{equation*}
 A_u=-\Delta-V=(-\Delta)(I-GM_V):\mathcal X\to\mathcal Y
\end{equation*}
is Fredholm of index zero.

Suppose $\phi\in D^{1,2}(\R^N)$ satisfies $-\Delta\phi=V\phi$ weakly in $\R^N$.
Then its Kelvin transform $\widehat\phi$ belongs to $D^{1,2}(\R^N)$
and satisfies
\begin{equation}\label{eq:kelvin-linearized}
 -\Delta\widehat\phi=\tilde{V}\widehat\phi
 \,\ \ \text{in}\ \ \R^N\setminus\{0\}.
\end{equation}
For any $\zeta\in C_c^\infty(\R^N)$, we test
\eqref{eq:kelvin-linearized} with $\chi_\sigma\zeta$, where
$\chi_\sigma$ is a smooth cut-off function with $0\leq\chi_\sigma\leq1$,
vanishing on $\mathbf B_\sigma$, equal to $1$ outside $\mathbf B_{2\sigma}$,
and satisfying $\|\nabla\chi_\sigma\|_{L^2}\leq C\sigma^{(N-2)/2}$.
Letting $\sigma\to0^{+}$, we obtain that
\eqref{eq:kelvin-linearized} holds weakly in $\R^N$.
Since $V,\tilde{V}\in C^{0,\alpha}_{\rm loc}(\R^N)$,
we apply a bootstrap argument and Schauder estimates to
$-\Delta\phi=V\phi$ and \eqref{eq:kelvin-linearized}
to obtain that
$\phi,\widehat\phi\in C^{2,\alpha}_{\rm loc}(\R^N)$
and hence $\phi\in\mathcal X$.
Conversely, it follows from Lemma~\ref{lem:function-spaces} that
$\mathcal X\subset D^{1,2}(\R^N)$. Thus $\phi\in\ker A_u$ if and only if
$\phi\in D^{1,2}(\R^N)$ and $-\Delta\phi=V\phi$ holds weakly in $\R^N$.
Define
\[
 Z:=\left\{f\in\mathcal Y:\,
       \int_{\R^N}f\phi\,\mathrm{d}y=0
       \, \ \text{for any}\ \ \phi\in\ker A_u\right\}.
\]
For $\zeta\in\mathcal X$ and $\phi\in\ker A_u$,
we test $-\Delta\phi=V\phi$ with $\zeta$ to obtain that
\[
 \int_{\R^N}(A_u\zeta)\phi\,\mathrm{d}y
 =\int_{\R^N}\bigl(\nabla\zeta\cdot\nabla\phi-V\zeta\phi\bigr)\,\mathrm{d}y=0.
\]
Since $\zeta,\phi\in D^{1,2}(\R^N)$,
$A_u\zeta\in\mathcal Y\subset L^{2N/(N+2)}(\R^N)$ and
$V\in L^{N/2}(\R^N)$, we apply H\"older's inequality to obtain that
$(A_u\zeta)\phi,\ \ \nabla\zeta\cdot\nabla\phi,\ \ V\zeta\phi
\in L^1(\R^N)$. Thus $\operatorname{ran}A_u\subset Z$.
Set $d:=\dim\ker A_u$ and choose a basis
$\phi_1,\ldots,\phi_d$ of $\ker A_u$.
Since $C_c^\infty(\R^N)\subset\mathcal Y$, the functionals
$f\mapsto\int_{\R^N}f\phi_j\,\mathrm{d}y$, $j=1,\ldots,d$, are linearly
independent in $\mathcal Y^*$, so that $Z$ has codimension $d$.
The operator $A_u$ is Fredholm of index zero, and therefore
$\operatorname{ran}A_u$ also has codimension $d$.
Using $\operatorname{ran}A_u\subset Z$, we then obtain that
$\operatorname{ran}A_u=Z$, which proves \eqref{eq:range-criterion}.
This completes the proof of Lemma~\ref{lem:linearization}.

\end{proof}

We remark that a solution $u$ of \eqref{eq:euclidean} is non-degenerate in
$D^{1,2}(\R^N)$ if $\ker A_u=\{0\}$. We now use
Lemma~\ref{lem:linearization} and Sard's theorem to prove
Theorem~\ref{thm:construction}.

\begin{proof}[Proof of Theorem~\ref{thm:construction}]
Let $\mathcal S_0$ be the set of all solutions of \eqref{eq:euclidean} with
$K=K_{\varepsilon}$. It follows from Proposition~\ref{prop:compactness} that
$\mathcal S_0$ is compact in $\mathcal X$.
If $\mathcal S_0=\varnothing$, the conclusion of
Theorem~\ref{thm:construction} holds with $h=0$.
We therefore assume that $\mathcal S_0\ne\varnothing$.
For $h\in C_c^\infty(\Omega)$ and $u\in\mathcal X_+$, define
\begin{equation*}
 F(h,u):=-\Delta u-(K_{\varepsilon}+h)u^p\in\mathcal Y.
\end{equation*}
If $K=K_{\varepsilon}+h$ satisfies \eqref{eq:admissible}, it follows from
Lemmas~\ref{lem:regularity} and~\ref{lem:function-spaces} that the zero points of
$F(h,\cdot)$ in $\mathcal X_+$ are exactly the solutions of
\eqref{eq:euclidean}.
Let $E'$ be any finite-dimensional subspace of $C_c^\infty(\Omega)$.
Using $\mathcal T(Ku^p)=\tilde{K}\widehat u^p$ for $K=K_{\varepsilon}+h$ with
$h\in E'$ and the inequalities $\min_{\overline{\mathbf{B}}_2}u>0$ and
$\min_{\overline{\mathbf{B}}_2}\widehat u>0$, we then obtain that
$F:E'\times\mathcal X_+\to\mathcal Y$ is smooth.
For $(k,\phi)\in E'\times\mathcal X$, we calculate that
\[
 DF(h,u)[k,\phi]
 =-\Delta\phi-p(K_{\varepsilon}+h)u^{p-1}\phi-ku^p.
\]

For any $u\in\mathcal S_0$, we use the operator $A_u$ defined in
Lemma~\ref{lem:linearization} with $K=K_{\varepsilon}$.
Suppose that $\phi\in\ker A_u$ satisfies
\begin{equation}\label{eq:cokernel}
 \int_{\R^N}h u^p\phi\,\mathrm{d}y=0
       \, \ \text{for any}\ \ h\in C_c^\infty(\Omega).
\end{equation}
Since $u>0$, we deduce from \eqref{eq:cokernel} that $\phi=0$ in $\Omega$.
Applying unique continuation \cite{Aronszajn}, we then obtain that
$\phi=0$ in $\R^N$.
It follows from \eqref{eq:range-criterion} that
$\mathcal Y/\operatorname{ran}A_u$ is spanned by the classes of $hu^p$
with $h\in C_c^\infty(\Omega)$;
otherwise, there exists a nonzero $\phi\in\ker A_u$ satisfying
\eqref{eq:cokernel}.
Hence the map
\[
 C_c^\infty(\Omega)\times\mathcal X\longrightarrow\mathcal Y,
 \ \ (h,\phi)\longmapsto A_u\phi-hu^p
\]
is onto.
Since $\operatorname{ran}A_u$ has finite codimension in $\mathcal Y$,
we can choose a finite-dimensional subspace $E_u$ of $C_c^\infty(\Omega)$
such that $DF(0,u):E_u\times\mathcal X\to\mathcal Y$ is onto.

For any $u_0\in\mathcal S_0$, the operator
$DF(0,u_0):E_{u_0}\times\mathcal X\to\mathcal Y$ is onto and has a
finite-dimensional kernel, so that it has a bounded linear right inverse.
Using the continuity of $DF$, we then obtain
a neighborhood $\mathcal U_{u_0}\subset\mathcal X_+$ of $u_0$ such that
$DF(0,u):E_{u_0}\times\mathcal X\to\mathcal Y$ remains onto
whenever $u\in\mathcal U_{u_0}$.
Since $\mathcal S_0$ is compact in $\mathcal X$, we can choose
$u_1,\ldots,u_L\in\mathcal S_0$ such that
\[
 \mathcal S_0\subset\bigcup_{j=1}^L\mathcal U_{u_j}.
\]
Define
\[
 E:=E_{u_1}+\cdots+E_{u_L}.
\]
If $E=\{0\}$, replace $E$ by $\operatorname{span}\{k_0\}$
for a nonzero $k_0\in C_c^\infty(\Omega)$.
For any $u\in\mathcal S_0$, choose $j$ such that
$u\in\mathcal U_{u_j}$.
Since $E_{u_j}\subset E$, we obtain that
$DF(0,u):E\times\mathcal X\to\mathcal Y$ is onto.
This operator also has a finite-dimensional kernel and hence a bounded
linear right inverse $\mathcal R:\mathcal Y\to E\times\mathcal X$.
Using the continuity of $DF$, we then obtain that
\[
 \|(DF(h,v)-DF(0,u))\mathcal R\|_{\mathcal L(\mathcal Y,\mathcal Y)}<1
\]
for $(h,v)$ in a neighborhood of $(0,u)$ in $E\times\mathcal X_+$.
The Neumann series then gives the inverse of
\[
 DF(h,v)\mathcal R
 =I_{\mathcal Y}+(DF(h,v)-DF(0,u))\mathcal R,
\]
so that $DF(h,v):E\times\mathcal X\to\mathcal Y$ is onto.

Restrict $F$ to $E\times\mathcal X_+$ and choose $0<\tau\leq\varepsilon^2$
such that $K_{\varepsilon}+h$ satisfies \eqref{eq:admissible} whenever
$h\in E$ and $\|h\|_{C^m}<\tau$.
We next prove that after decreasing $\tau$ if necessary,
$DF(h,u):E\times\mathcal X\to\mathcal Y$ is onto whenever $F(h,u)=0$ and
$\|h\|_{C^m}<\tau$.
On the contrary, suppose there exist zero points $(h_i,u_i)$ of $F$ with $h_i\in E$,
$\|h_i\|_{C^m}\to0$ as $i\to\infty$ and $DF(h_i,u_i)$ not onto.
It follows from Proposition~\ref{prop:compactness} that up to a subsequence,
$(h_i,u_i)\to(0,u)$ in $E\times\mathcal X$ as $i\to\infty$ for some
$u\in\mathcal S_0$. This contradicts the surjectivity of $DF$ near $(0,u)$
and proves the asserted choice of $\tau$.
Since $E$ is finite-dimensional and $D_uF(h,u)$ has Fredholm index zero
at each zero point $(h,u)$ of $F$, we obtain that $DF(h,u)$ has
Fredholm index $\dim E$.
Applying the implicit function theorem, we then conclude that the set
\[
 \mathcal M=\{(h,u)\in E\times\mathcal X_+:\,\|h\|_{C^m}<\tau,\ \ F(h,u)=0\}
\]
is a smooth manifold of dimension $\dim E$.

Consider the map $\pi:\mathcal M\to\{h\in E:\,\|h\|_{C^m}<\tau\}$ given by
$\pi(h,u)=h$.
Since $E$ is finite-dimensional, we deduce from Proposition~\ref{prop:compactness}
that $\{(h,u)\in\mathcal M:\,\|h\|_{C^m}\leq r\}$ is compact in
$E\times\mathcal X$ for any $r\in(0,\tau)$.
Choose $r_\ell\in(0,\tau)$ with $r_\ell\to\tau^{-}$ as $\ell\to\infty$
such that the compact sets
$\{(h,u)\in\mathcal M:\,\|h\|_{C^m}\leq r_\ell\}$
cover $\mathcal M$.
Since each of these compact sets has a finite cover by coordinate
neighborhoods, we obtain a countable cover
$\mathcal M=\bigcup_{j=1}^{\infty}U_j$.
It follows from Sard's theorem \cite{Sard} that the critical values of $\pi$
have measure zero in $E$. Thus there exist $h\in E$ arbitrarily close to zero
such that
\[
 D\pi(h,u)\bigl(T_{(h,u)}\mathcal M\bigr)=E
 \, \ \text{for any}\ \ u\ \ \text{with}\ \ (h,u)\in\mathcal M.
\]
For any such $h$ and any $(h,u)\in\mathcal M$, we apply
Lemma~\ref{lem:linearization} with $K=K_{\varepsilon}+h$, so that
$D_uF(h,u)=A_u$. Since
\[
 T_{(h,u)}\mathcal M=\{(k,\phi)\in E\times\mathcal X:\,A_u\phi-ku^p=0\},
\]
we use the surjectivity of $D\pi(h,u)$ to obtain that
$ku^p\in\ran A_u$ for any $k\in E$.
Moreover, $DF(h,u)[k,\phi]=A_u\phi-ku^p$ maps
$E\times\mathcal X$ onto $\mathcal Y$, so that $\ran A_u=\mathcal Y$.
Since $A_u$ has Fredholm index zero, we have $\ker A_u=\{0\}$.
We therefore conclude from Lemma~\ref{lem:linearization} that if
$h\in E$ satisfies $\|h\|_{C^m}<\tau$ and is outside the critical values of $\pi$,
then all solutions of \eqref{eq:euclidean} with $K=K_{\varepsilon}+h$ are
non-degenerate in $D^{1,2}(\R^N)$.

Since all norms on $E$ are equivalent, we can choose $h\in E$ outside the
critical values of $\pi$ such that $\|h\|_{C^m}<\tau$ and
$\|h\|_{C^s}<\eta$.
For $K=K_{\varepsilon}+h$, it follows from
Lemma~\ref{lem:supported-properties} that $K\in C^2(\R^N)$,
$K>0$ in $\R^N$ and \eqref{eq:maxset} holds. Since $h=0$ near $\{|y|=1\}$, the expansion in
\eqref{eq:equator-expansion} yields that \eqref{eq:euclidean-expansion} holds with
$c_0=\varepsilon(m/2)^{m/2}$.
For $K=K_{\varepsilon}+h$, we deduce from Proposition~\ref{prop:compactness} that the
solution set of \eqref{eq:euclidean} is compact in $\mathcal X$.
Since $D_uF(h,u):\mathcal X\to\mathcal Y$ is an isomorphism at each solution
$u$, the implicit function theorem implies that $u$ is the only solution of
\eqref{eq:euclidean} in some neighborhood of $u$ in $\mathcal X$.
Suppose that \eqref{eq:euclidean} admits infinitely many distinct
solutions $u_j$. It follows from Proposition~\ref{prop:compactness}
that up to a subsequence, $u_j\to u_*$ in $\mathcal X$ as
$j\to\infty$ for some solution $u_*$.
The local uniqueness at $u_*$ then implies that $u_j=u_*$
for all large $j$, contradicting $u_j\ne u_k$ for $j\ne k$.
Hence the solution set of \eqref{eq:euclidean} is finite, which
completes the proof of Theorem~\ref{thm:construction}.

\end{proof}

\section*{Acknowledgements}

The second author is grateful to Liping Wang and Chunyi Zhao for drawing his
attention to Wei--Yan's conjecture during discussions at their seminar.

\section*{Declaration of AI-assisted tools}

This paper was written by the authors and was not generated by artificial
intelligence. During its preparation, the authors used AI tools as auxiliary
aids for exploratory discussions, preliminary proof drafting, and language
editing. The authors take full responsibility for the mathematical validation,
the final proof choices, and the final text.

\end{document}